\renewcommand{\Re}{\mathrm{Re}\,}
\renewcommand{\Im}{\mathrm{Im}\,}
\renewcommand{\vec}{\mathbf}
\newtheorem{theorem}{Theorem}[section]
\newtheorem{lemma}[theorem]{Lemma}
\newtheorem{corollary}[theorem]{Corollary}
\theoremstyle{definition}
\newtheorem{definition}[theorem]{Definition}
\theoremstyle{definition}
\newtheorem{example}[theorem]{Example}
\theoremstyle{remark}
\newtheorem{remark}[theorem]{Remark}
\numberwithin{equation}{section}
\def\Xint#1{\mathchoice
{\XXint\displaystyle\textstyle{#1}}%
{\XXint\textstyle\scriptstyle{#1}}%
{\XXint\scriptstyle
\scriptscriptstyle{#1}}%
{\XXint\scriptscriptstyle
\scriptscriptstyle{#1}}%
\!\int}
\def\XXint#1#2#3{{
\setbox0=\hbox{$#1{#2#3}{\int}$}
\vcenter{\hbox{$#2#3$}}\kern-.5\wd0}}
\def\dashint{\Xint-}
\def\ddashint{\Xint=}
\begin{document}
\title{ Asymptotic expansions and fast computation of oscillatory
Hilbert transforms}
\author{Haiyong Wang\footnotemark[1]~\footnotemark[2], Lun Zhang\footnotemark[3] ~and
Daan Huybrechs\footnotemark[1]~\footnotemark[2]} \maketitle
\renewcommand{\thefootnote}{\fnsymbol{footnote}}
\footnotetext[1]{Department of Computer Science, Katholieke
Universiteit Leuven, Celestijnenlaan 200A, BE-3001 Leuven, Belgium
(why198309@yahoo.com.cn, haiyong.wang@cs.kuleuven.be,
daan.huybrechs@cs.kuleuven.be)}

\footnotetext[2]{This research was supported by the Fund for
Scientific Research -- Flanders through Research Project G.0617.10.}

\footnotetext[3]{Department of Mathematics, Katholieke Universiteit
Leuven, Celestijnenlaan 200B, BE-3001 Leuven, Belgium
(lun.zhang@wis.kuleuven.be) This author is a Postdoctoral Fellow of
the Fund for Scientific Research - Flanders (FWO), Belgium. }

\begin{abstract}
In this paper, we study the asymptotics and fast computation of the
one-sided oscillatory Hilbert transforms of the form
$$H^{+}(f(t)e^{i\omega t})(x)=\dashint_{0}^{\infty}e^{i\omega
t}\frac{f(t)}{t-x}dt,\qquad \omega>0,\qquad x\geq 0,
$$
where the bar indicates the Cauchy principal value and $f$ is a
real-valued function with analytic continuation in the first
quadrant, except possibly a branch point of algebraic type at the
origin. When $x=0$, the integral is interpreted as a Hadamard
finite-part integral, provided it is divergent. Asymptotic
expansions in inverse powers of $\omega$ are derived for each fixed
$x\geq 0$, which clarify the large $\omega$ behavior of this
transform. We then present efficient and affordable approaches for
numerical evaluation of such oscillatory transforms. Depending on
the position of $x$, we classify our discussion into three regimes,
namely, $x=\mathcal{O}(1)$ or $x\gg1$, $0<x\ll 1$ and $x=0$.
Numerical experiments show that the convergence of the proposed
methods greatly improve when the frequency $\omega$ increases. Some
extensions to oscillatory Hilbert transforms with Bessel oscillators
are briefly discussed as well.
\end{abstract}

{\bf Keywords:} Cauchy principal value, high oscillation, asymptotic
expansions, numerical steepest descent methods.

\vspace{0.05in}

{\bf AMS classifications:} 34E05, 65D32, 44A15.

\section{Introduction}
Finite Fourier integrals of the form
\begin{equation}\label{eq:Fourier integrals}
\int_a^b f(x)e^{i\omega g(x)}dx
\end{equation}
with $\omega>0$ and $f(x)$, $g(x)$ being sufficiently smooth
functions have long been the subject of intensive study due to their
frequent occurrences in wide fields ranging from quantum chemistry,
image analysis, electrodynamics and computerized tomography to fluid
mechanics \cite{Iserles}. One difficulty in computing integrals
\eqref{eq:Fourier integrals} is that, for large frequency $\omega$,
the classical integration methods like Gauss quadrature are
inapplicable, since they often require many function evaluations
which make them highly time consuming. To overcome this difficulty,
many efficient approaches have been developed and significant
progress has occurred over the past few years. For instance, based
on asymptotic expansions of \eqref{eq:Fourier integrals} as $\omega$
tends to infinity, Iserles and N{\o}rsett \cite{Iserles,Iserles1}
proposed the asymptotic and Filon-type methods to evaluate
oscillatory integrals numerically. Other efficient approaches
include Levin-type methods, numerical steepest descent methods,
generalized quadrature rules, GMRES methods, modified
Clenshaw-Curtis methods, etc.; we refer to
\cite{asheim,deano,dominguez,evans,Huybrechs,levin,levin1,Olver,Olver1,Xiang,Xiang1}
and references therein for more information. All these methods
complement each other but share the advantageous property that their
accuracy improves greatly when $\omega$ increases.

Recently, oscillatory Hilbert transforms of the form
\begin{equation}\label{def:general transform}
\dashint_{\Gamma}e^{i\omega t}\frac{f(t)}{t-x}dt, \qquad x\in\Gamma,
\end{equation}
have received considerable attention as well. Here, $\Gamma$ is an
oriented curve in the complex plane, $f$ is a complex-valued
function satisfying a H\"{o}lder condition, and the bar denotes
Cauchy principal value. The interest in theoretical and numerical
study of such integral transforms arises from the fact that many
problems encountered in practice can be represented by an integral
equation with an oscillatory kernel having a singularity of Cauchy
type \cite{capobianco,Davis,king}; see also \cite{king book} for
numerous applications of Hilbert transforms in applied sciences.
Although the oscillatory Hilbert transforms \eqref{def:general
transform} bear some resemblances with \eqref{eq:Fourier integrals},
nevertheless, the singularity of Cauchy type suggests special
treatments. It will be especially interesting to see, as pointed out
in \cite{Olver 11}, if the aforementioned methods can be extended to
handle oscillatory Hilbert transforms.

For $\Gamma=[-1,1]$, we obtain from \eqref{def:general transform}
the finite oscillatory Hilbert transforms:
\begin{equation}\label{def:finite transform}
\dashint_{-1}^{1}e^{i\omega t}\frac{f(t)}{t-x}dt
=\lim_{\varepsilon\rightarrow0^{+}}\left(\int_{-1}^{x-\varepsilon}+\int_{x+\varepsilon}^{1}\right)e^{i\omega
t}\frac{f(t)}{t-x}dt
\end{equation}
with $-1<x<1$. An asymptotic expansion of \eqref{def:finite
transform} in inverse powers of $\omega$ was established by Lyness
in \cite{lyness} based on analytic continuation. Meanwhile, there
are several numerical schemes available to calculate
\eqref{def:finite transform}, most of which are typically based on
interpolatory type techniques. For example, Okecha \cite{Okecha1}
proposed to compute \eqref{def:finite transform} by using the
Lagrange interpolation polynomial of degree $n$ interpolating $f$ at
the $n+1$ zeros of the Legendre polynomial. Capobianco and Criscuolo
introduced a numerically stable procedure in \cite{capobianco},
which relies on an interpolatory procedure at the zeros of Jacobi
polynomials. In a recent paper \cite{wang1}, Wang and Xiang have
presented an integration rule of interpolatory type with the aid of
the Chebyshev points of the second kind. The rule is uniformly
convergent with respect to the pole $x$ when $f$ is analytic in a
neighborhood of the interval $[-1,1]$, and it can be implemented by
means of the fast Fourier transform (FFT). If the function $f$ is
analytic in a sufficiently large region of the complex plane
containing $[-1,1]$, then the complex integration method \cite{Mil}
and the numerical steepest descent method \cite{Huybrechs} can be
extended to compute such integrals efficiently, and the accuracy
improves greatly as $\omega$ increases; see \cite{wang2} for
details.

In this paper, we are concerned with one-sided oscillatory Hilbert
transforms on the positive real axis:
\begin{align}\label{def:one-side transform}
H^+(f(t)e^{i\omega t})(x):= \dashint_{0}^{\infty}e^{i\omega
t}\frac{f(t)}{t-x}dt =
\lim_{\varepsilon\rightarrow0^{+}}\left(\int_{0}^{x-\varepsilon}+\int_{x+\varepsilon}^{\infty}\right)
e^{i\omega t}\frac{f(t)}{t-x}dt,\quad x\geq 0,
\end{align}
i.e., $\Gamma=\mathbb{R}^+$ in \eqref{def:general transform}. Here,
$f$ is a real-valued function satisfying some conditions. In
particular, it has an analytic continuation in the first quadrant of
the complex plane, except possibly a branch point of algebraic type
at the origin. When $x=0$, the integral is interpreted as a Hadamard
finite-part integral, provided it is divergent; see Section
\ref{sec:hadamard} below for a definition. We point out that the
integral \eqref{def:one-side transform} is also closely related to
infinite oscillatory Hilbert transforms on the real axis given by
\begin{equation*}\label{def:Hilbert trans on real line}
H(f(t)e^{i\omega t})(x):=\dashint_{-\infty}^{\infty}e^{i\omega
t}\frac{f(t)}{t-x}dt =
\lim_{\varepsilon\rightarrow0^{+}}\left(\int_{-\infty}^{x-\varepsilon}+\int_{x+\varepsilon}^{\infty}\right)
e^{i\omega t}\frac{f(t)}{t-x}dt, \qquad x\in\mathbb{R}.
\end{equation*}
Indeed, by assuming $x>0$, it is easily seen that
\begin{equation}\label{eq:splitting}
H(f(t)e^{i\omega t})(x)=\int_{0}^{\infty}-e^{-i\omega
t}\frac{f(-t)}{t+x}dt+H^+(f(t)e^{i\omega t})(x).
\end{equation}
The first integral on the right hand side of \eqref{eq:splitting} is
the Stieltjes transform of $-e^{-i\omega t}f(-t)$, which is a
regular integral for $x>0$.

A large $x$ expansion of one-sided oscillatory Hilbert transform was
already established by Wong \cite{wong}. Instead of
\eqref{def:one-side transform}, he considered $H^+(f(t))(x)$.
However, it is assumed that $f$ is a locally integrable function on
$[0,\infty)$ and has an asymptotic expansion of the form
$$
f(t)\sim e^{ict}\sum_{s=0}^{\infty}a_st^{-s-\alpha}, \quad
\mathrm{as}\quad t\rightarrow\infty,
$$
where $0<\alpha\leq1$ and $c$ is a real number. Thus, one may have
$c=\omega$. Let $\psi_0(t)=f(t)$ and define $\psi_n(t)$ by
$$
f(t)=\sum_{s=0}^{n-1}a_se^{ic t}t^{-s-\alpha}+\psi_n(t), \qquad
n\geq1.
$$
It was then shown that (see \cite[Thm.~1]{wong})
\begin{equation}\label{eq:large x expansion}
H^{+}(f(t))(x)=E_{\alpha,c}(x)\sum_{s=0}^{n-1}\frac{a_s}{x^s}
-\sum_{s=1}^{n}\frac{b_s}{x^s}+\frac{1}{x^n}\delta_n(x), \quad n\geq
1,
\end{equation}
for $0<\alpha<1$, where
$$
E_{\alpha,c}(x)=\frac{e^{ic
x}}{x^\alpha}\left[e^{-i\alpha\pi}\Gamma(1-\alpha)\Gamma(\alpha,ic
x)+i\pi \right],
$$
$$
b_s=\int_{0}^{\infty}t^{s-1}\psi_s(t)dt, \quad s\geq1,
$$
and
$$
\delta_n(x)=\dashint_{0}^{\infty}\frac{t^n\psi_n(t)}{t-x}dt, \quad
n=0,1,\ldots.
$$
Here $\Gamma(\alpha)$ is the Gamma function \cite[p.~255]{Abram} and
$\Gamma(\alpha,z)$ is the complementary incomplete Gamma function
\cite[p.~260]{Abram}. The expansions when $\alpha=1$ are also
derived in a similar manner; see \cite[Thm.~2 and Thm.~3]{wong}.
Moreover, the following bounds for $\delta_n(x)$ are achieved:
\begin{equation*}
|\delta_n(x)|\leq M_n\frac{\ln x}{x^{\alpha}},
\end{equation*}
for all $x>e$, where $M_n$ is a positive constant. The difficulty in
applying expansions \eqref{eq:large x expansion} is that, as also
pointed out by Wong, the coefficients $b_s$ are inconvenient for
calculations. In a later paper \cite{ursell}, Ursell generalized the
results of Wong and showed that these coefficients can be readily
determined whenever the Mellin transform of $f(t)$ is known. An
interesting example was given for $f(t)=J_0^2(t)$ with applications
in water waves, where $J_0(t)$ is the zeroth-order Bessel function
of the first kind.

For the numerical aspects of \eqref{def:one-side transform}, King et
al. \cite{king} constructed a fairly robust numerical procedure by
using convergence accelerator techniques. Unfortunately, this series
acceleration method may suffer from difficulties when the
singularity is embedded in a region of extreme oscillatory behavior.

The purpose of this paper is two-fold. On the one hand, we shall
derive asymptotic expansions of one-sided oscillatory Hilbert
transforms \eqref{def:one-side transform} as $\omega\to\infty$. To
the best of our knowledge, none of the studies are available in this
direction. Such an expansion clarifies the behavior of
\eqref{def:one-side transform} for large $\omega$ and also provides
a powerful mean for the design of effective computational methods.
On the other hand, in view of the fact that asymptotic expansions
are not suitable for numerical calculation, we present efficient
quadrature rules to approximate such integrals. It comes out that
these rules depend on the position of $x$. This can be seen from
\eqref{def:one-side transform} and \eqref{eq:large x expansion},
where the integral may tend to zero as $x\rightarrow\infty$ and blow
up as $x\to 0$.

The rest of this paper is organized as follows. We perform
asymptotic analysis of oscillatory Hilbert transforms in Section
\ref{sec:asy}. The analyticity of $f$ is of importance in our
derivation. In Section \ref{sec:computation}, we propose efficient
and affordable approaches for numerical evaluation of such
oscillatory transforms. These methods are designed for three
regimes, that is, $x=\mathcal{O}(1)$ or $x\gg1$, $0<x\ll 1$ and
$x=0$, which cover all the situations. Numerical experiments show
that the convergence of the proposed methods greatly improves when
the frequency $\omega$ increases. Some ideas in this paper can also
be extended to study oscillatory Hilbert transforms with Bessel
oscillators. We give a brief description of this aspect in Section
\ref{sec:extension}. We conclude this paper with some final remarks
in Section \ref{sec:remarks}.

\section{Asymptotic analysis of oscillatory Hilbert
transforms}\label{sec:asy}

\subsection{Large $\omega$ expansion with $x>0$}
We start with the derivation of asymptotic expansions of oscillatory
Hilbert transforms \eqref{def:one-side transform} for large $\omega$
with $x>0$. An important ingredient in our analysis is the following
lemma which allows us to reduce the Cauchy principal integrals
\eqref{def:one-side transform} to ordinary integrals under certain
restrictions on $f$.

\begin{lemma}\label{lem:transfer transform}
Let $f$ be a locally integrable function on $[0,\infty)$ and
continuously differentiable over $(0, \infty)$. Suppose that $f$ has
an analytic continuation in the first quadrant of the complex plane,
except possibly a branch point at the origin, and there exist
constants $M>0$, $\delta<1$ and $0 \leq d < \omega$ such that
\begin{equation}\label{eq:growth condition}
|f(z)|\leq M|z|^{\delta}e^{d\Im{(z)}},
\end{equation}\label{eq:growth of f}
as $|z|\rightarrow\infty$ in the first quadrant. Then we have
\begin{equation}\label{eq:tranfer trans}
H^+(f(t)e^{i\omega t})(x)= \dashint_{0}^{\infty}e^{i\omega
t}\frac{f(t)}{t-x}dt =i\pi e^{i\omega x}
f(x)+\int_{0}^{\infty}e^{-\omega p}\frac{f(ip)}{p+ix}dp
\end{equation}
for each $x>0$, whenever the integral exists.
\end{lemma}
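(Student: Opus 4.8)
The plan is to prove \eqref{eq:tranfer trans} by a contour deformation (Cauchy's theorem) in the first quadrant, rotating the ray of integration from the positive real axis onto the positive imaginary axis, which converts the highly oscillatory factor $e^{i\omega t}$ into the exponentially decaying factor $e^{-\omega p}$. First I would fix $x>0$ and regularize the Cauchy principal value: for small $\varepsilon>0$ replace the singular integral by $\left(\int_0^{x-\varepsilon}+\int_{x+\varepsilon}^{\infty}\right)e^{i\omega t}f(t)/(t-x)\,dt$, and deal with the pole at $t=x$ by indenting the contour with a small semicircle. The natural closed contour to consider is the boundary of the circular sector $\{z : 0\le \arg z\le \pi/2,\ r\le |z|\le R\}$ with $r\to 0$, $R\to\infty$, modified near $z=x$ by a semicircular indentation of radius $\varepsilon$. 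Since $f$ is analytic in the (open) first quadrant and $z\mapsto e^{i\omega z}f(z)/(z-x)$ has its only singularity inside this region at $z=x$ (which we have excised), Cauchy's theorem gives that the sum of the contributions over the four pieces of the boundary vanishes.

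The key steps, in order, are: (i) estimate the large-arc contribution and show it tends to $0$ as $R\to\infty$ — on the arc $z=Re^{i\theta}$, $0\le\theta\le\pi/2$, one has $|e^{i\omega z}|=e^{-\omega R\sin\theta}$, while the growth hypothesis \eqref{eq:growth condition} gives $|f(z)|\le MR^{\delta}e^{dR\sin\theta}$, so the integrand is bounded by $MR^{\delta-1}e^{-(\omega-d)R\sin\theta}$ up to constants; since $\omega-d>0$, a Jordan-type estimate (using $\sin\theta\ge 2\theta/\pi$ on $[0,\pi/2]$ and absorbing the single non-decaying point $\theta=0$ via the extra factor $R^{\delta-1}$ with $\delta<1$) forces this piece to vanish. (ii) Estimate the small-arc contribution near the origin: if $f$ has at most an algebraic branch point there, $|f(z)|=O(|z|^{-\gamma})$ with $\gamma<1$ (indeed the local integrability plus $\delta<1$ control this), so on $|z|=r$ the integrand is $O(r^{-\gamma})\cdot O(1/x)$ and the arc length is $O(r)$, giving a contribution $O(r^{1-\gamma})\to 0$. (iii) Evaluate the semicircular indentation at $z=x$: shrinking $\varepsilon\to 0$, the standard half-residue computation yields $-\big({-i\pi}\big)\,\mathrm{Res}_{z=x}\!\big[e^{i\omega z}f(z)/(z-x)\big]$; being careful with the orientation of the indentation relative to the sector boundary, this produces exactly the term $i\pi e^{i\omega x}f(x)$ in \eqref{eq:tranfer trans}. (iv) Parametrize the remaining imaginary-axis segment by $z=ip$, $p\in(0,\infty)$, with $dz=i\,dp$ and $z-x=ip-x=-(x-ip)$; collecting signs from the orientation (the imaginary axis is traversed from $i\infty$ down to $0$ in the positively oriented sector boundary) gives $\int_0^\infty e^{-\omega p} f(ip)/(p+ix)\,dp$, as claimed. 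Adding (i)–(iv) and letting $\varepsilon\to 0$ simultaneously with the principal-value limit then yields \eqref{eq:tranfer trans}.

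The main obstacle I expect is step (i): making the vanishing of the large arc genuinely rigorous when $\delta$ can be positive (so $|f|$ grows polynomially) and when $d$ is allowed to be as large as just below $\omega$. One must split the arc into a region $\theta\in[0,\delta_0]$ near the real axis — where $\sin\theta$ is small and the exponential decay is weak, but where the arc is short and the $R^{\delta-1}$ prefactor (with $\delta<1$) provides the decay — and the complementary region $\theta\in[\delta_0,\pi/2]$, where $e^{-(\omega-d)R\sin\theta}$ decays exponentially and crushes the polynomial factor. A clean way to organize this is to invoke a Jordan-lemma-type argument after first rotating slightly, or to integrate the bound $R^{\delta-1}e^{-(\omega-d)R\sin\theta}$ in $\theta$ directly and check it is $o(1)$; this requires only that $\delta<1$ and $d<\omega$, precisely the hypotheses given. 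A secondary technical point is justifying that the $\varepsilon$-limit (principal value) and the contour limits commute; this follows because, away from the origin and from $z=x$, the integrand is bounded and the indentation contributions converge independently of the order of limits. Finally, the phrase ``whenever the integral exists'' in the statement covers the borderline convergence of $\int_0^\infty e^{-\omega p}f(ip)/(p+ix)\,dp$ near $p=0$ (when the branch point is strong) and near $p=\infty$ (when $\delta$ is close to $1$), so no separate argument for its convergence is needed beyond the hypotheses already in force.
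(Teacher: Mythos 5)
Your proposal is correct and follows essentially the same route as the paper: Cauchy's theorem on a quarter-disc contour in the first quadrant with a semicircular indentation at $x$, the Jordan-type estimate $\sin\theta\ge 2\theta/\pi$ combined with $\delta<1$ and $d<\omega$ to kill the large arc, the half-residue $i\pi e^{i\omega x}f(x)$ from the indentation, and the substitution $t=ip$ on the imaginary axis. Your extra step (ii), excising a small arc at the origin to handle the branch point, is a minor refinement the paper leaves implicit but changes nothing substantive.
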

\begin{proof}
Let us consider a quarter of the disc centered at the origin with
radius $R$, which lies in the first quadrant, and denote by
$\Gamma_R$ its boundary, i.e.,
\begin{equation}
\Gamma_R:=\{ z \,|\, |z|=R, \Re (z) >0, \Im (z)>0 \}.
\end{equation}
For each fixed $x>0$, we can find $R$ large enough such that a half
disc $U_{x,\epsilon}^+:=\{ z \,|\, |z-x|\leq \epsilon, \Im (z)>0 \}$
can be excluded from the quarter of the disc, where $\epsilon$ is a
small positive number. The obtained domain is bounded by curves
orientated in a counter-clockwise manner as illustrated in Figure
\ref{fig:integralcontour}.
\begin{figure}[h]
\centering
\begin{overpic}[scale=0.7]{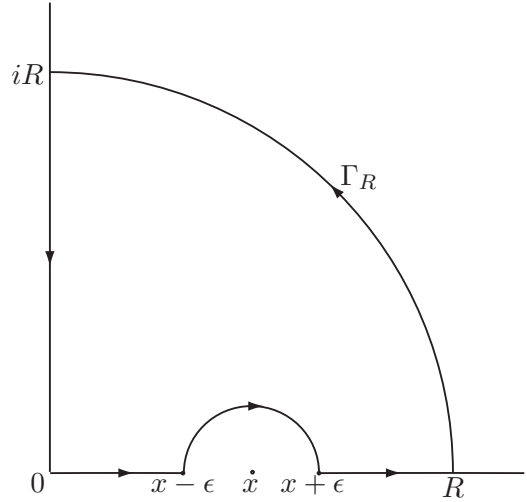}
\put(-7,82.5){$iR$} \put(-3,-3){0} \put(22,-3){$x-\epsilon$}
\put(41,-3){$x$} \put(49,-3){$x+\epsilon$} \put(82.5,-4){$R$}
\put(61.5,61.5){$\Gamma_R$}
\end{overpic}
\caption{A quarter disc in the first quadrant with a small
indentation at $x$. } \label{fig:integralcontour}
\end{figure}

According to our assumptions on $f$, it is easily seen that the
integrand $e^{i \omega t}\frac{f(t)}{t-x}$ of \eqref{def:one-side
transform} is analytic in a quarter of the disc with a small
indentation at $x$, as described above. We then obtain from Cauchy's
theorem that
\begin{equation}
\left( \int_0^{x-\epsilon}+\int_{\partial U_{x,\epsilon}^+}
+\int_{x+\epsilon}^R+\int_{\Gamma_R}+\int_{i R}^0\right) e^{i \omega
t}\frac{f(t)}{t-x} dt=0,
\end{equation}
where $\partial U_{x,\epsilon}^+$ stands for the boundary of
$U_{x,\epsilon}^+$. This, together with \eqref{def:one-side
transform}, implies
\begin{equation}\label{eq:three integrals}
H^+(f(t)e^{i\omega t})(x)=\lim_{\epsilon \to 0, R \to \infty}\left(
-\int_{\Gamma_R}+\int_{0}^{i R}-\int_{\partial
U_{x,\epsilon}^+}\right) e^{i \omega t}\frac{f(t)}{t-x} dt.
\end{equation}
We next evaluate the three integrals on the right hand side of
\eqref{eq:three integrals}.

A simple change of variable
$$
t=Re^{i\theta},\qquad 0\leq\theta\leq\frac{\pi}{2},
$$
yields
\begin{align}
\left|\int_{\Gamma_R}e^{i \omega t}\frac{f(t)}{t-x} dt\right|
&=\left|\int_{0}^{\frac{\pi}{2}}e^{i\omega Re^{i\theta}}
\frac{f(Re^{i\theta})}{Re^{i\theta}-x}Re^{i\theta}id\theta \right| \nonumber \\
&\leq R\int_{0}^{\frac{\pi}{2}}e^{-\omega
R\sin\theta}\left|\frac{f(Re^{i\theta})}{Re^{i\theta}-x}\right|d\theta \nonumber \\
&\leq \frac{R}{R-x}\int_{0}^{\frac{\pi}{2}}e^{-\omega
R\sin\theta}|f(Re^{i\theta})|d\theta \nonumber
\\
&\leq
\frac{MR^{1+\delta}}{R-x}\int_{0}^{\frac{\pi}{2}}e^{-(\omega-d)
R\sin\theta}d\theta, \label{eq:int on circle}
\end{align}
where in the last step we have made use of \eqref{eq:growth of f}.
Recall the well-known inequality $ \sin\theta \geq
\frac{2}{\pi}\theta$, if $0\leq\theta \leq\frac{\pi}{2}$; cf.
\cite[p. 223]{Ablowitz}. For $R$ large enough, we obtain
\begin{align}\label{eq:est of 1st int}
\left|\int_{\Gamma_R}e^{i \omega t}\frac{f(t)}{t-x} dt\right| &\leq
\frac{MR^{1+\delta}}{R-x}\int_{0}^{\frac{\pi}{2}}e^{-\frac{2}{\pi}(\omega-d)
R\theta}d\theta, \nonumber \\
&=\frac{\pi M R^\delta}{2(\omega-d) (R-x)}\left(1-e^{-(\omega-d)
R}\right)\to 0, \qquad \textrm{as $R\to\infty$}.
\end{align}
It is also easily seen that
\begin{align}
\lim_{R\to\infty}\int_{0}^{iR}e^{i \omega t}\frac{f(t)}{t-x} dt
&=\lim_{R\to\infty}\int_{0}^{R}e^{- \omega p}\frac{f(ip)}{ip-x}idp \nonumber \\
&=\int_{0}^{\infty}e^{- \omega p}\frac{f(ip)}{p+ix}dp.
\end{align}
To evaluate the third integral over the contour $\partial
U_{x,\epsilon}^+$, we appeal to \cite[(2.7) and (2.9)]{wang2}, which
gives
\begin{equation}\label{eq:est of 3rd int}
\lim_{\epsilon\rightarrow0}\int_{\partial
U_{x,\epsilon}^+}e^{i\omega t}\frac{f(t)}{t-x}dt=-i\pi e^{i\omega
x}f(x).
\end{equation}

Finally, substituting \eqref{eq:est of 1st int}--\eqref{eq:est of
3rd int} into \eqref{eq:three integrals}, we obtain
\eqref{eq:tranfer trans}.
\end{proof}

Now, we are ready to prove
\begin{theorem}\label{thm:asy x>0}
Let $f$ be a function as given in Lemma \ref{lem:transfer transform}
and assume that $f$ takes an asymptotic expansion of the form
\begin{equation}\label{eq:asy of f}
f(t)\sim\sum_{j=0}^{\infty}a_jt^{j-\alpha}
\end{equation}
as $t\to 0^+$, where $0 \leq \alpha <1$. Then the one-sided
oscillatory Hilbert transforms \eqref{def:one-side transform} can be
expanded in the following fashion
\begin{equation}\label{eq:asy x>0}
H^+(f(t)e^{i\omega t})(x)\sim i\pi e^{i\omega
x}f(x)-\sum_{\ell=0}^{\infty}\frac{\Gamma(\ell+1-\alpha)}{\omega^{\ell+1-\alpha}}
e^{\frac{\pi}{2}(\ell+1-\alpha)i}\left(\sum_{j+k=\ell}\frac{a_j}{x^{k+1}}\right)
\end{equation}
as $\omega\to\infty$, for each $x>0$.
\end{theorem}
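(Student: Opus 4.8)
The plan is to start from the exact formula in Lemma~\ref{lem:transfer transform}, which reduces the Cauchy principal value integral to
\[
H^+(f(t)e^{i\omega t})(x)= i\pi e^{i\omega x}f(x)+\int_{0}^{\infty}e^{-\omega p}\frac{f(ip)}{p+ix}\,dp,
\]
so that the only remaining task is to extract the large-$\omega$ asymptotics of the Laplace-type integral $F(\omega):=\int_{0}^{\infty}e^{-\omega p}\,g(p)\,dp$ with $g(p)=f(ip)/(p+ix)$. Since $e^{-\omega p}$ decays rapidly away from $p=0$, a Watson's-lemma argument shows that the asymptotic expansion is entirely governed by the behaviour of $g(p)$ as $p\to0^+$. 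The hypothesis \eqref{eq:asy of f} gives $f(ip)\sim\sum_{j\ge0}a_j (ip)^{j-\alpha}=\sum_{j\ge0}a_j e^{\frac{\pi}{2}(j-\alpha)i}p^{j-\alpha}$, while for fixed $x>0$ the factor $1/(p+ix)$ is analytic at $p=0$ and expands as $\sum_{k\ge0}(-1)^k p^k/(ix)^{k+1}$.

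Next I would multiply these two series and collect powers of $p$: writing $\ell=j+k$, the coefficient of $p^{\ell-\alpha}$ in $g(p)$ is
\[
c_\ell=e^{-\frac{\pi}{2}\alpha i}\sum_{j+k=\ell} a_j\, e^{\frac{\pi}{2}j i}\,\frac{(-1)^k}{(ix)^{k+1}}
= e^{-\frac{\pi}{2}\alpha i}\sum_{j+k=\ell} a_j\, e^{\frac{\pi}{2}(j-k-1)i}\,\frac{(-1)^k}{x^{k+1}}\,(-1)^{k}\cdot(\text{sign bookkeeping}),
\]
and after simplifying the powers of $i$ one should obtain $c_\ell = e^{-\frac{\pi}{2}(\ell+1-\alpha)i}\sum_{j+k=\ell}a_j/x^{k+1}$ up to the sign arithmetic that needs to be done carefully. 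Then Watson's lemma gives $\int_0^\infty e^{-\omega p}p^{\ell-\alpha}\,dp=\Gamma(\ell+1-\alpha)/\omega^{\ell+1-\alpha}$, and term-by-term integration yields
\[
F(\omega)\sim\sum_{\ell=0}^{\infty} c_\ell\,\frac{\Gamma(\ell+1-\alpha)}{\omega^{\ell+1-\alpha}},
\]
which, once the constant $c_\ell$ is pinned down, is exactly the sum in \eqref{eq:asy x>0} (the minus sign there being absorbed into $e^{\frac{\pi}{2}(\ell+1-\alpha)i}$ versus $e^{-\frac{\pi}{2}(\ell+1-\alpha)i}$ — this is the bookkeeping to watch). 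Adding back the boundary term $i\pi e^{i\omega x}f(x)$ completes the derivation.

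To make the Watson's-lemma step rigorous I would justify that $g$ is locally integrable near $0$ (it is, since $\delta<1$ and $\alpha<1$ so $p^{-\alpha}$ is integrable), that the asymptotic series for $f(ip)$ may be differentiated/truncated with a controlled remainder (which follows from $f$ being an honest analytic function with the stated expansion, so the remainder after $N$ terms is $O(p^{N-\alpha})$ uniformly on a neighbourhood of $0$), and that the tail contribution $\int_{\rho}^{\infty}e^{-\omega p}g(p)\,dp$ is exponentially small — here the growth bound \eqref{eq:growth condition} on $f$, which on the imaginary axis $z=ip$ reads $|f(ip)|\le M p^{\delta} e^{dp}$ with $d<\omega$, guarantees $|g(p)|\le M p^{\delta-1} e^{dp}$ for large $p$ so the tail integral converges and is $O(e^{-(\omega-d)\rho})$.

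The main obstacle I anticipate is not conceptual but the sign/phase bookkeeping: correctly tracking the powers of $i$ that arise from $(ip)^{j-\alpha}$, from $1/(p+ix)=-i/(x-ip)$, and from expanding the geometric series, and then reconciling the resulting $e^{\pm\frac{\pi}{2}(\ell+1-\alpha)i}$ phase with the overall minus sign in front of the double sum in \eqref{eq:asy x>0}. I would handle this by computing the $\ell=0$ term explicitly as a sanity check — it should read $-\,a_0\,\Gamma(1-\alpha)\,e^{\frac{\pi}{2}(1-\alpha)i}\,\omega^{\alpha-1}/x$ — and only then state the general term. A secondary (routine) point is noting that the expansion is asymptotic rather than convergent, so no claim about the remainder beyond $o(\omega^{\alpha-1-N})$ is needed, and that the $x>0$ restriction is used precisely in the analyticity of $1/(p+ix)$ at $p=0$; the cases $x=0$ and $0<x\ll1$ are deferred to later sections as the paper indicates.
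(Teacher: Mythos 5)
Your proposal is correct and follows essentially the same route as the paper: reduce via Lemma~\ref{lem:transfer transform} to the Laplace integral $\int_0^\infty e^{-\omega p}f(ip)/(p+ix)\,dp$, expand the integrand at $p\to0^+$ by multiplying the series for $f(ip)$ with the geometric series for $1/(p+ix)$, and apply Watson's lemma. The phase bookkeeping you flagged closes cleanly: with $\ell=j+k$ each term contributes $\frac{1}{ix}\,a_j(ip)^{j-\alpha}\frac{(ip)^k}{x^k}=-i^{\ell+1-\alpha}a_j p^{\ell-\alpha}/x^{k+1}$, so $c_\ell=-e^{\frac{\pi}{2}(\ell+1-\alpha)i}\sum_{j+k=\ell}a_j/x^{k+1}$ (not $e^{-\frac{\pi}{2}(\ell+1-\alpha)i}\sum_{j+k=\ell}a_j/x^{k+1}$, which differs for generic $\alpha$), and this reproduces exactly the displayed sum including its minus sign, consistent with your correct $\ell=0$ sanity check.
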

\begin{proof}
By \eqref{eq:asy of f}, it follows that, for each fixed $x>0$,
\begin{align}
\frac{f(ip)}{p+ix} & \sim \frac{1}{ix}\left(\sum_{j=0}^{\infty}
a_j(ip)^{j-\alpha}\right)
\left(\sum_{k=0}^{\infty}\frac{(ip)^k}{x^k}\right)
\nonumber \\
&=-i\sum_{\ell=0}^{\infty}
i^{\ell-\alpha}p^{\ell-\alpha}\left(\sum_{j+k=\ell}\frac{a_j}{x^{k+1}}\right)
\nonumber\\
&= -\sum_{\ell=0}^{\infty}
e^{\frac{\pi}{2}(\ell+1-\alpha)i}p^{\ell-\alpha}
\left(\sum_{j+k=\ell}\frac{a_j}{x^{k+1}}\right)
\end{align}
as $p\to 0^+$. In view of \eqref{eq:tranfer trans}, an appeal to
Watson's lemma (cf. \cite[p. 20]{wong1}) gives us,
\begin{align*}
H^+(f(t)e^{i\omega t})(x)&=i\pi e^{i\omega x}
f(x)+\int_{0}^{\infty}e^{-\omega p}\frac{f(ip)}{p+ix}dp \\
&\sim  i\pi e^{i\omega x}
f(x)-\sum_{\ell=0}^{\infty}e^{\frac{\pi}{2}(\ell+1-\alpha)i}
\left(\sum_{j+k=\ell}\frac{a_j}{x^{k+1}}\right)
\int_{0}^{\infty}p^{\ell-\alpha}e^{-\omega p}dp\\
&= i\pi e^{i\omega x}
f(x)-\sum_{\ell=0}^{\infty}\frac{\Gamma(\ell+1-\alpha)}{\omega^{\ell+1-\alpha}}
e^{\frac{\pi}{2}(\ell+1-\alpha)i}
\left(\sum_{j+k=\ell}\frac{a_j}{x^{k+1}}\right),
\end{align*}
as $\omega\to\infty$, which is \eqref{eq:asy x>0}.
\end{proof}

\begin{remark}
It is worth noting that the expansion \eqref{eq:asy x>0} is only
uniformly valid for $x$ bounded away from $0$. To clarify the
behavior when $0<x\ll1$, we make the following decomposition:
\begin{align}\label{eq:decomp x near 0}
\dashint_{0}^{\infty}\frac{f(t)}{t-x}e^{i\omega t}dt &
=\dashint_{0}^{\infty}\frac{f_{\alpha}(t)}{t^{\alpha}(t-x)}e^{i\omega
t}dt \nonumber \\
& =
\int_{0}^{\infty}\frac{f_{\alpha}(t)-f_{\alpha}(x)}{t^{\alpha}(t-x)}e^{i\omega
t}dt + f_{\alpha}(x)\dashint_{0}^{\infty}\frac{e^{i\omega
t}}{t^{\alpha}(t-x)}dt
\end{align}
with $f_{\alpha}(t)=t^{\alpha}f(t)$. For the first integral on the
right hand side of \eqref{eq:decomp x near 0}, in view of the
expansion
\[
\frac{f_{\alpha}(t)-f_{\alpha}(x)}{t-x} \sim
\left(\sum_{j=1}^{\infty}a_jx^{j-1}\right) +
\left(\sum_{j=2}^{\infty}a_jx^{j-1}\right)t + \mathcal{O}(t^2)
\]
as $t\rightarrow0^{+}$, then by \cite[Thm.~1, p.~199]{wong} we find
that
\[
\int_{0}^{\infty}\frac{f_{\alpha}(t)-f_{\alpha}(x)}{t^{\alpha}(t-x)}e^{i\omega
t}dt \sim
\left(\sum_{j=1}^{\infty}a_jx^{j-1}\right)\frac{\Gamma(1-\alpha)e^{\frac{\pi}{2}(1-\alpha)i}}{\omega^{1-\alpha}}
+ \mathcal{O}\left(\frac{1}{\omega^{2-\alpha}}\right), \quad
\omega\rightarrow\infty,
\]
where the sum on the right side converges when $0<x\ll1$. For the
second one, we obtain from \cite{wong} and \eqref{eq:c=0} below that
\begin{align}\label{eq:exact cpv form}
\dashint_{0}^{\infty}\frac{e^{i\omega t}}{t^{\alpha}(t-x)}dt =
\left\{
                    \begin{array}{ll}
                      \frac{e^{i\omega
x}}{x^\alpha}\left[e^{-i\alpha\pi}\Gamma(1-\alpha)\Gamma(\alpha,i\omega
x)+i\pi \right], & \hbox{if $0<\alpha<1$,} \\
                     e^{i\omega x} [ \mathrm{Ei}(1,i\omega x)+i \pi], & \hbox{if $\alpha=0$.}
                    \end{array}
                  \right.
\end{align}
Combining these results, we can see clearly that the asymptotic
behaviour of the oscillatory Hilbert transforms \eqref{def:one-side
transform} depends strongly on the behaviour of the product of
$\omega$ and $x$. For example, the asymptotic behaviour of
\eqref{def:one-side transform} as $x\rightarrow0^{+}$,
$\omega\rightarrow\infty$ and $\omega x\rightarrow0^{+}$ can be
derived by taking into account the asymptotic expansions of
$\mathrm{Ei}(1,i\omega x)$ and $\Gamma(\alpha,i\omega x)$
respectively. We omit the details here and only give a leading term
\begin{align}\label{eq:exact cpv form}
\dashint_{0}^{\infty}\frac{f(t)}{t-x}e^{i\omega t}dt \sim \left\{
                    \begin{array}{ll}
                       \frac{a_0}{x^{\alpha}}\left[ \frac{\pi}{\sin(\alpha\pi)}e^{-i\alpha\pi}+i\pi \right] + \mathcal{O}(\omega^{\alpha}), & \hbox{if $0<\alpha<1$,}  \\
                     - a_0 \log(\omega x) + \mathcal{O}(1), & \hbox{if $\alpha=0$.}
                    \end{array}
                  \right.
\end{align}


\end{remark}

\begin{remark}
In Theorem \ref{thm:asy x>0}, we require that $f$ has an analytic
continuation in the first quadrant and satisfies the growth
condition \eqref{eq:growth of f}. These conditions can be further
relaxed to allow $f$ has a simple pole in the first quadrant or $f$
only has an analytic continuation around the real axis and satisfies
\eqref{eq:growth of f}. In both cases, the only contribution will be
a exponentially small term in $\omega$, thus, the expansion
\eqref{eq:asy x>0} still holds.
\end{remark}

\subsection{Large $\omega$ expansion with $x=0$: asymptotics of Hadamard finite-part integrals}
\label{sec:hadamard}

When $x=0$, the integrand of the one-sided oscillatory Hilbert
transforms \eqref{def:one-side transform} has a singularity at the
origin. If the integral is divergent, the transform should be
understood as a finite-part integral in the Hadamard sense. Note
that the definition of Hadamard finite-part integral does not change
the value of a convergent integral. It is the aim of this section to
find the asymptotics of \eqref{def:one-side transform} with $x=0$.

Assume that $f$ still admits an asymptotic expansion near the origin
as given in \eqref{eq:asy of f}, we then formally have
\begin{equation}\label{eq:subtracting singularity}
\int_0^{\infty}e^{i\omega
t}\frac{f(t)}{t}dt=a_0\int_0^{\infty}\frac{e^{i\omega
t}}{t^{\alpha+1}}dt+\int_0^{\infty}e^{i\omega
t}\frac{f(t)-a_0t^{-\alpha}}{t}dt.
\end{equation}
There are two integrals on the right hand side of
\eqref{eq:subtracting singularity}. The first one is divergent and
should be interpreted as a Hadamard finite-part integral over the
positive real axis. The integrand of the second one has an
integrable singularity at the origin. Hence, it is well defined.

To this end, one needs to extend standard Hadamard finite-part
integrals for the finite interval (cf. \cite[Sec. 1.4]{Krommer}) to
semifinite integrals. Here, we adapt the definition from
\cite{monegato}:
\begin{definition}\label{def:finite part}
Let $g(x)$ be of class $C^{m+1}[0,\infty)$ and such that
\begin{equation}\label{eq:allowable}
\left|\int_{0}^{\infty}g^{(k)}(t)t^{p-1}dt\right|<\infty, \quad
k=0,\ldots,m+1,
\end{equation}
for all $p\geq1$. Then for any $\eta\geq1$, a finite-part integral
of order $\delta$ for the positive real axis is defined as
\begin{equation}\label{eq:Hadamard definition}
\ddashint_{0}^{\infty}\frac{g(t)}{t^{\eta}}dt:=
\ddashint_{0}^{b}\frac{g(t)}{t^{\eta}}dt+\int_{b}^{\infty}\frac{g(t)}{t^{\eta}}dt,
\end{equation}
where $\ddashint$ stands for a Hadamard finite-part integral and $b$
is an arbitrary positive constant.
\end{definition}

In \cite{monegato}, a function $g$ satisfying \eqref{eq:allowable}
is called allowable. The Definition \ref{def:finite part} defines a
Hadamard finite-part integral for the infinite interval from a
finite-part integral on the finite interval, and it is independent
of the choice of $b$.

With Definition \ref{def:finite part}, we observe that
\begin{lemma}\label{lem:calculation of finite part integrals}
\begin{align}\label{eq:calculation of finite part integrals}
\ddashint_0^{\infty}\frac{e^{i\omega t}}{t^{\alpha+1}}dt=\left\{
                    \begin{array}{ll}
                      \frac{e^{\frac{\pi}{2}(2-\alpha)i}\omega^\alpha}{\alpha}\Gamma(1-\alpha), & \hbox{if $0<\alpha<1$,} \\
                      -\gamma-\log\omega+i\frac{\pi}{2}, & \hbox{if $\alpha=0$,}
                    \end{array}
                  \right.
\end{align}
where $\gamma$ is the Euler constant.
\end{lemma}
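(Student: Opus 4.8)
The plan is to reduce the Hadamard finite‑part integral, via a contour rotation onto the positive imaginary axis exactly as in the proof of Lemma~\ref{lem:transfer transform}, to an incomplete Gamma function (resp. an exponential integral when $\alpha=0$), and then to extract the finite part from the known small‑argument behaviour of these special functions.

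First I would unwind Definition~\ref{def:finite part}. Since $g(t)=e^{i\omega t}$ satisfies $g(0)=1$, the Hadamard regularisation of $\ddashint_0^{b}e^{i\omega t}t^{-\alpha-1}\,dt$ on the finite interval subtracts precisely the divergent contribution of the constant term, so \eqref{eq:Hadamard definition} together with the $b$‑independence of the definition yields
\[
\ddashint_0^{\infty}\frac{e^{i\omega t}}{t^{\alpha+1}}\,dt
=\lim_{\epsi\to0^{+}}\left(\int_{\epsi}^{\infty}\frac{e^{i\omega t}}{t^{\alpha+1}}\,dt-\frac{\epsi^{-\alpha}}{\alpha}\right)\qquad(0<\alpha<1),
\]
and, in the logarithmic case,
\[
\ddashint_0^{\infty}\frac{e^{i\omega t}}{t}\,dt
=\lim_{\epsi\to0^{+}}\left(\int_{\epsi}^{\infty}\frac{e^{i\omega t}}{t}\,dt+\log\epsi\right),
\]
with each integral on the right a convergent improper integral.

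Next I would apply Cauchy's theorem to $e^{i\omega t}t^{-\alpha-1}$, with the principal branch of $t^{-\alpha-1}$ (analytic in the open first quadrant), over the region bounded by the segment $[\epsi,R]\subset\R^{+}$, the large arc $\Gamma_R=\{Re^{i\theta}:0\le\theta\le\pi/2\}$, the segment $[iR,i\epsi]$ of the imaginary axis, and the small arc $\gamma_{\epsi}=\{\epsi e^{i\theta}:0\le\theta\le\pi/2\}$. The estimate $\sin\theta\ge\frac{2}{\pi}\theta$ used in \eqref{eq:est of 1st int} gives $\int_{\Gamma_R}\to0$ as $R\to\infty$. Writing $t=ip$ on $[iR,i\epsi]$ and using $(ip)^{\alpha+1}=e^{i\pi(\alpha+1)/2}p^{\alpha+1}$, that segment contributes $-e^{-i\pi\alpha/2}\int_{\epsi}^{R}e^{-\omega p}p^{-\alpha-1}\,dp$, which after $u=\omega p$ and $R\to\infty$ equals $-e^{-i\pi\alpha/2}\omega^{\alpha}\,\Gamma(-\alpha,\omega\epsi)$ when $0<\alpha<1$ and $-\mathrm{Ei}(1,\omega\epsi)$ when $\alpha=0$. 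On the small arc, splitting $e^{i\omega t}=1+(e^{i\omega t}-1)$, the $(e^{i\omega t}-1)$–part is $O(\epsi^{1-\alpha})\to0$ because $\alpha<1$, while $\int_{\gamma_{\epsi}}t^{-\alpha-1}\,dt$ is computed from its antiderivative and equals $\frac{\epsi^{-\alpha}}{\alpha}\bigl(e^{-i\pi\alpha/2}-1\bigr)$ for $0<\alpha<1$ and $-i\pi/2$ for $\alpha=0$.

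Finally I would solve the resulting Cauchy identity for $\int_{\epsi}^{\infty}e^{i\omega t}t^{-\alpha-1}\,dt$, insert the small‑argument expansions $\Gamma(-\alpha,x)=\Gamma(-\alpha)+x^{-\alpha}/\alpha+O(x^{1-\alpha})$ and $\mathrm{Ei}(1,x)=-\gamma-\log x+O(x)$ (see \cite{Abram}), and subtract the counterterm of the first step; the $\epsi^{-\alpha}$ (resp. $\log\epsi$) pieces then cancel identically, leaving $e^{-i\pi\alpha/2}\omega^{\alpha}\Gamma(-\alpha)$ (resp. $-\gamma-\log\omega+i\pi/2$). Using $\Gamma(-\alpha)=-\Gamma(1-\alpha)/\alpha$ and $-e^{-i\pi\alpha/2}=e^{\frac{\pi}{2}(2-\alpha)i}$ rewrites the former as $\frac{e^{\frac{\pi}{2}(2-\alpha)i}\omega^{\alpha}}{\alpha}\Gamma(1-\alpha)$, which is \eqref{eq:calculation of finite part integrals}. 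The hard part is not analysis but bookkeeping: keeping the principal branch of $t^{-\alpha-1}$ and the attendant phase factors straight through the rotation, and checking that the three sources of divergence — the imaginary‑axis integral, the small arc, and the subtracted counterterm — cancel exactly; the case $\alpha=0$ has to be carried along in parallel throughout, since there the algebraic singularity degenerates into a logarithm and $\Gamma(-\alpha,\cdot)$ is replaced by $\mathrm{Ei}(1,\cdot)$.
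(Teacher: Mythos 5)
Your proof is correct, but it takes a genuinely different route from the paper's. The paper's proof stays on the real axis: it integrates by parts on $(\varepsilon,b)$ and on $(b,\infty)$, which lowers the singularity from $t^{-\alpha-1}$ to the integrable $t^{-\alpha}$, discards the single divergent boundary term $e^{i\omega\varepsilon}/(\alpha\varepsilon^{\alpha})$ as the finite-part prescription, and then reads off the answer from the known value $\int_0^{\infty}t^{-\alpha}e^{i\omega t}\,dt=\Gamma(1-\alpha)(-i\omega)^{\alpha-1}$ after the $\mathrm{Ei}(\alpha,-i\omega b)$ contributions of the two pieces cancel (with the analogous $\mathrm{Ei}(1,-i\omega\varepsilon)$ computation when $\alpha=0$). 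You instead keep the non-integrable singularity intact, rotate the contour of $t^{-\alpha-1}e^{i\omega t}$ around a quarter-annulus $\varepsilon<|t|<R$ onto the imaginary axis, and balance three separate divergent contributions — the small arc, the truncated imaginary-axis integral via $\Gamma(-\alpha,\omega\varepsilon)$, and the Hadamard counterterm — against one another using the small-argument expansion of $\Gamma(-\alpha,\cdot)$ (resp.\ $\mathrm{Ei}(1,\cdot)$). Both arguments are sound and I have checked your phase factors, the value $\frac{\varepsilon^{-\alpha}}{\alpha}(e^{-i\pi\alpha/2}-1)$ of the small-arc integral, and the final reduction via $\Gamma(-\alpha)=-\Gamma(1-\alpha)/\alpha$; the price of your route is the extra bookkeeping on the indentation, the reward is that the mechanism of the finite part (exact cancellation of the $\varepsilon^{-\alpha}$, resp.\ $\log\varepsilon$, terms) is made completely explicit rather than hidden in a single "neglect the divergent term" step. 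One point worth recording if you write this up: your opening identification of Definition~\ref{def:finite part} with the regularized limit $\lim_{\varepsilon\to0^{+}}\bigl(\int_{\varepsilon}^{\infty}-\varepsilon^{-\alpha}/\alpha\bigr)$ is exactly what the paper uses implicitly when it drops $e^{i\omega\varepsilon}/(\alpha\varepsilon^{\alpha})$ — the two counterterms differ by $O(\varepsilon^{1-\alpha})$ — so the two computations regularize the same object and must agree.
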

\begin{proof}
From \eqref{eq:Hadamard definition}, it is readily seen that
\begin{align}\label{def:fp}
\ddashint_{0}^{\infty}\frac{e^{i\omega t}}{t^{\alpha+1}}dt:=
\ddashint_{0}^{b}\frac{e^{i\omega
t}}{t^{\alpha+1}}dt+\int_{b}^{\infty}\frac{e^{i\omega
t}}{t^{\alpha+1}}dt,
\end{align}
where $b$ is an arbitrary positive constant. If $0<\alpha<1$, an
appeal to integration by parts gives us
\begin{align}\label{def:fp part1 without limit}
\int_{\varepsilon}^{b}\frac{e^{i\omega t}}{t^{\alpha+1}}dt
&=\frac{e^{i\omega
\varepsilon}}{\alpha\varepsilon^{\alpha}}-\frac{e^{i\omega
b}}{\alpha
b^{\alpha}}+\frac{i\omega}{\alpha}\int_{\varepsilon}^{b}t^{-\alpha}e^{i\omega
t}dt \nonumber \\
&=\frac{e^{i\omega
\varepsilon}}{\alpha\varepsilon^{\alpha}}-\frac{e^{i\omega
b}}{\alpha
b^{\alpha}}+\frac{i\omega}{\alpha}\left(\frac{\Gamma(1-\alpha)}{(-i\omega)^{1-\alpha}}-b^{1-\alpha}\mathrm{Ei}(\alpha,-i\omega
b)-\int_{0}^{\varepsilon}t^{-\alpha}e^{i\omega t}dt\right),
\end{align}
and
\begin{align}\label{def:fp part2}
\int_{b}^{\infty}\frac{e^{i\omega
t}}{t^{\alpha+1}}dt=\frac{e^{i\omega b}}{\alpha
b^{\alpha}}+\frac{i\omega}{\alpha}\int_{b}^{\infty}t^{-\alpha}e^{i\omega
t}dt=\frac{e^{i\omega b}}{\alpha
b^{\alpha}}+\frac{i\omega}{\alpha}b^{1-\alpha}\mathrm{Ei}(\alpha,-i\omega
b),
\end{align}
where $\varepsilon$ is a small positive number and
\begin{equation}
\mathrm{Ei}(\rho,z):=\int_1^{\infty}t^{-\rho }e^{-zt}dt,\quad
\rho>0,\quad \Re(z)\geq 0,
\end{equation}
is the exponential integral. In \eqref{def:fp part1 without limit},
by neglecting the divergent term $\frac{e^{i\omega
\varepsilon}}{\alpha\varepsilon^{\alpha}}$ and noting that the last
integral vanishes as $\varepsilon\rightarrow0^+$, we obtain
\begin{align}\label{def: fp part11}
\ddashint_{0}^{b}\frac{e^{i\omega
t}}{t^{\alpha+1}}dt:=-\frac{e^{i\omega b}}{\alpha
b^{\alpha}}+\frac{i\omega}{\alpha}\left(\frac{\Gamma(1-\alpha)}{(-i\omega)^{1-\alpha}}-b^{1-\alpha}\mathrm{Ei}(\alpha,-i\omega
b)\right).
\end{align}
Combining \eqref{def:fp}, \eqref{def: fp part11} and \eqref{def:fp
part2}, we get the desired result.

Similarly, if $\alpha=0$, we note that
\begin{align}
\left(\int_{\varepsilon}^{b}+\int_b^{\infty}\right)\frac{e^{i\omega
t}}{t}dt=\mathrm{Ei}(1,-i\omega
\varepsilon)=-\gamma-\log\omega+i\frac{\pi}{2}-\log\varepsilon+\mathcal{O}(\varepsilon),
\end{align}
as $\varepsilon\to 0$, where $\gamma$ is the Euler constant. Hence,
\begin{align}\label{def:fp alpha=0}
\ddashint_0^\infty\frac{e^{i\omega
t}}{t}dt:=-\gamma-\log\omega+i\frac{\pi}{2},
\end{align}
as shown in \eqref{eq:calculation of finite part integrals}.
\end{proof}

A combination of Lemma \ref{lem:calculation of finite part
integrals} and \eqref{eq:subtracting singularity} gives us
\begin{equation}\label{eq:Hadamard int}
\ddashint_0^{\infty}e^{i\omega t}\frac{f(t)}{t}dt=\left\{
                    \begin{array}{ll}
                      \frac{e^{\frac{\pi}{2}(2-\alpha)i}\omega^\alpha}{\alpha}\Gamma(1-\alpha)a_0+\int_0^{\infty}e^{i\omega
t}\frac{f(t)-a_0t^{-\alpha}}{t}dt, & \hbox{if $0<\alpha<1$,} \\
                      (-\gamma-\log\omega+i\frac{\pi}{2})f(0)+\int_0^{\infty}e^{i\omega
t}\frac{f(t)-f(0)}{t}dt, & \hbox{if $\alpha=0$.}
                    \end{array}
                  \right.
\end{equation}
Here we have made use of the fact that $a_0=f(0)$ if $\alpha=0$ in
\eqref{eq:asy of f}. To derive asymptotic expansions of Hadamard
finite part integrals \eqref{eq:Hadamard int}, it is then sufficient
to find asymptotics of Fourier-type integrals $\int_0^{\infty}\tilde
f(t)e^{i\omega t}dt$, where
\begin{equation}\label{def:g}
\tilde f(t):=\frac{f(t)-a_0t^{-\alpha}}{t},\qquad 0\leq\alpha<1.
\end{equation}
Our result is stated below:
\begin{theorem}\label{thm:asy of Hadamard}
Let $f$ be a locally integrable function on $[0,\infty)$ and $m$
times continuously differentiable over $(0, \infty)$, $m$ being a
positive integer. Suppose that $f$ has the asymptotic expansion
\eqref{eq:asy of f}, and this expansion can be differentiated $m$
times. Moreover, each of the integrals
\begin{equation}
\int_{1}^{\infty}\tilde f^{(s)}(t)e^{i\omega t}dt, \qquad
s=0,1,\cdots,m,
\end{equation}
converges uniformly for $\omega$ sufficiently large, where $\tilde
f$ is given in \eqref{def:g}. Then, as $\omega \to \infty$, we have
\begin{equation}\label{eq:asy of Hadamard int}
\ddashint_0^{\infty}e^{i\omega t}\frac{f(t)}{t}dt=\left\{
                    \begin{array}{ll}
                     \frac{e^{\frac{\pi}{2}(2-\alpha)i}\omega^\alpha}{\alpha}\Gamma(1-\alpha)a_0+
\sum\limits_{\ell=1}^{m}a_{\ell}
e^{\frac{\pi}{2}(\ell-\alpha)i}
\frac{\Gamma(\ell-\alpha)}{\omega^{\ell-\alpha}}+o(\omega^{-m}), & \hbox{if $0<\alpha<1$,} \\
                      (i\frac{\pi}{2}-\gamma-\log\omega)f(0)+\sum\limits_{\ell=1}^{m}a_{\ell}
e^{\frac{\pi \ell}{2} i}
\frac{(\ell-1)!}{\omega^{\ell}}+o(\omega^{-m}), & \hbox{if
$\alpha=0$.}
                    \end{array}
                  \right.
\end{equation}
\end{theorem}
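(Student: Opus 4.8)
The plan is to reduce the asymptotic expansion of the Hadamard finite-part integral to a standard Watson-type lemma applied to the Fourier integral $\int_0^\infty \tilde f(t)e^{i\omega t}\ud t$, using the decomposition \eqref{eq:Hadamard int} that has already been established via Lemma \ref{lem:calculation of finite part integrals}. Since \eqref{eq:Hadamard int} isolates the explicit singular term (either $\frac{e^{\pi(2-\alpha)i/2}\omega^\alpha}{\alpha}\Gamma(1-\alpha)a_0$ or $(-\gamma-\log\omega+i\pi/2)f(0)$), it suffices to find the large-$\omega$ expansion of the regular Fourier integral $\int_0^\infty \tilde f(t)e^{i\omega t}\ud t$ with $\tilde f$ as in \eqref{def:g}.

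First I would determine the behavior of $\tilde f$ near the origin. Substituting the expansion \eqref{eq:asy of f} into \eqref{def:g}, the leading term $a_0 t^{-\alpha}$ cancels exactly, so
\[
\tilde f(t) = \frac{1}{t}\Bigl(\sum_{j\geq 1} a_j t^{j-\alpha}\Bigr) \sim \sum_{j\geq 1} a_j t^{j-1-\alpha}
\]
as $t\to 0^+$, i.e.\ the exponents are $j-1-\alpha$ for $j\geq 1$, which are $> -1$ and hence integrable. Second, I would invoke the standard asymptotic expansion for Fourier integrals of functions with an algebraic endpoint singularity — this is exactly the result of \cite[Thm.~1, p.~199]{wong} (already cited in the first remark), or equivalently a version of Watson's lemma after the rotation $t\mapsto ip$ as in the proof of Theorem \ref{thm:asy x>0}. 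This yields, for $0<\alpha<1$,
\[
\int_0^\infty \tilde f(t)e^{i\omega t}\ud t \sim \sum_{\ell\geq 1} a_\ell\, e^{\frac{\pi}{2}(\ell-\alpha)i}\,\frac{\Gamma(\ell-\alpha)}{\omega^{\ell-\alpha}},
\]
since the $j$-th term $a_j t^{j-1-\alpha}$ contributes $a_j\,\Gamma(j-\alpha)\,e^{\frac{\pi}{2}(j-\alpha)i}\omega^{-(j-\alpha)}$ by the identity $\int_0^\infty t^{\mu-1}e^{i\omega t}\ud t = \Gamma(\mu)e^{i\pi\mu/2}\omega^{-\mu}$ (valid for $0<\mu<1$, and by analytic continuation / integration by parts for the higher terms). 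For $\alpha=0$ the exponents of $\tilde f$ are the nonnegative integers $\ell-1$, $\ell\geq 1$, and the same mechanism gives the contribution $a_\ell\,e^{\pi\ell i/2}(\ell-1)!\,\omega^{-\ell}$. Adding back the explicit singular term from \eqref{eq:Hadamard int} gives precisely \eqref{eq:asy of Hadamard int}.

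The only genuinely delicate point — and the place where the hypotheses of the theorem are really used — is justifying that the asymptotic series may be truncated at order $m$ with a remainder that is $o(\omega^{-m})$, rather than merely formally writing down an infinite series. This is why the statement assumes $f$ is $m$ times continuously differentiable, that the expansion \eqref{eq:asy of f} may be differentiated $m$ times, and that $\int_1^\infty \tilde f^{(s)}(t)e^{i\omega t}\ud t$ converges uniformly for large $\omega$, $s=0,\dots,m$. The argument here is the classical one: write $\tilde f(t) = \sum_{\ell=1}^{m} a_\ell t^{\ell-1-\alpha} + r_m(t)$ where $r_m \in C^m$ near $0$ with $r_m(t)=O(t^{m-\alpha})$ and $r_m^{(s)}(t)=O(t^{m-\alpha-s})$; split $\int_0^\infty = \int_0^1 + \int_1^\infty$; on $[1,\infty)$ use the uniform-convergence hypothesis and repeated integration by parts to gain $m$ powers of $\omega^{-1}$; on $[0,1]$ integrate by parts $m$ times against $e^{i\omega t}$, checking that all boundary terms at $0$ are either absorbed into the listed coefficients or are $o(\omega^{-m})$, and that the final integral $\int_0^1 r_m^{(m)}(t)e^{i\omega t}\ud t = o(1)$ by the Riemann–Lebesgue lemma. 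I expect the bookkeeping of the boundary terms — matching them against the $\Gamma$-function coefficients and confirming nothing of order $\omega^{-m}$ or larger is dropped — to be the main obstacle, though it is routine in spirit; everything else follows directly from \eqref{eq:Hadamard int} and the cited Fourier-integral asymptotics.
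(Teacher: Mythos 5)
Your proposal is correct and follows essentially the same route as the paper: reduce via \eqref{eq:Hadamard int} to the ordinary Fourier integral of $\tilde f$, observe that $\tilde f(t)\sim\sum_{j\geq 1}a_j t^{j-1-\alpha}$ as $t\to 0^+$ since the $a_0t^{-\alpha}$ term cancels, and invoke the endpoint-singularity Fourier asymptotics of \cite[Thm.~1, p.~199]{wong}. The additional detail you supply on proving that cited expansion (splitting at $t=1$, integration by parts, Riemann--Lebesgue) is precisely what the paper delegates to the reference, so it is a welcome elaboration rather than a different argument.
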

\begin{proof}
From \eqref{def:g} and \eqref{eq:asy of f}, it is readily seen that
\begin{equation}
\tilde f(t)=\frac{f(t)-a_0t^{-\alpha}}{t}
\sim\sum_{j=0}^{\infty}a_{j+1}t^{j-\alpha}
\end{equation}
as $t\to 0^+$. This, together with \cite[Thm.~1, p.~199]{wong},
leads us to the following asymptotics of Fourier integrals:
\begin{equation}\label{eq:asy of Fourier1}
\int_0^{\infty}\tilde f(t)e^{i\omega t}=
                     \sum\limits_{\ell=1}^{m-1}a_{\ell}
e^{\frac{\pi}{2}(\ell-\alpha)i}
\frac{\Gamma(\ell-\alpha)}{\omega^{\ell-\alpha}}+o(\omega^{-m}).
\end{equation}
The asymptotic expansions \eqref{eq:asy of Hadamard int} of Hadamard
finite-part integrals then follows from a combination of
\eqref{eq:Hadamard int} and \eqref{eq:asy of Fourier1}.
\end{proof}

\begin{remark}
We do not require that $f$ has an analytic continuation to the first
quadrant in Theorem \ref{thm:asy of Hadamard}. However, such
requirement is essential in the proof of Theorem \ref{thm:asy x>0}.
\end{remark}

\subsection{Some examples}

We conclude this section with applications of Theorems \ref{thm:asy
x>0} and \ref{thm:asy of Hadamard} to some concrete examples.

\begin{example}
Consider the integral
\begin{equation}\label{eq:exp example}
\dashint_{0}^{\infty}e^{i\omega t}\frac{e^{-ct}}{t-x}dt,\qquad c\geq
0.
\end{equation}
This is the one-sided oscillatory Hilbert transform of
$f(t)=e^{-ct}$. Clearly, $f$ is an entire function and satisfies all
the assumptions of our theorems. Since
\begin{equation}
f(t)\sim\sum_{j=0}^{\infty}\frac{(-c)^j}{j!}t^j
\end{equation}
as $t\to 0^+$, one has
\begin{equation}
\alpha=0, \qquad a_j=\frac{(-c)^j}{j!}, \quad j=0,1,2,\cdots,
\end{equation}
in the notation of \eqref{eq:asy of f}. Hence, from \eqref{eq:asy
x>0} and \eqref{eq:asy of Hadamard int}, it is easily seen that
\begin{equation}\label{eq:asy of exp x>0}
\dashint_{0}^{\infty}e^{i\omega t}\frac{e^{-ct}}{t-x}dt\sim i\pi
e^{i\omega x}e^{-cx}
-\sum_{\ell=0}^{\infty}\frac{\Gamma(\ell+1)}{\omega^{\ell+1}}e^{\frac{\pi}{2}(\ell+1)i}
\left(\sum_{j+k=\ell}\frac{(-c)^j}{j!x^{k+1}}\right), \quad x>0,
\end{equation}
and
\begin{equation}
\ddashint_{0}^{\infty}e^{i\omega t}\frac{e^{-ct}}{t}dt\sim
i\frac{\pi}{2}-\gamma-\log\omega+\sum\limits_{\ell=1}^{\infty}
\frac{(-c)^{\ell}}{\ell}e^{i\frac{\pi}{2}\ell}
\frac{1}{\omega^{\ell}},
\end{equation}
as $\omega\rightarrow\infty$. In particular, if $c=0$, i.e.,
$f\equiv 1$, the above two expansions can be simplified, and we have
\begin{equation}\label{asy:f=1}
\dashint_{0}^{\infty}\frac{e^{i\omega t}}{t-x}dt\sim i\pi e^{i\omega
x}-\sum_{\ell=0}^{\infty}\frac{\Gamma(\ell+1)}{(\omega
x)^{\ell+1}}e^{\frac{\pi}{2}(\ell+1)i}, \quad x>0,
\end{equation}
and
\begin{equation}
\ddashint_{0}^{\infty}\frac{e^{i\omega t}}{t}dt =
i\frac{\pi}{2}-\gamma-\log\omega.
\end{equation}
It is worthwhile to point out that the asymptotic expansion
\eqref{asy:f=1} is the same as the large $x$ expansion of
$\dashint_{0}^{\infty}\frac{e^{i\omega t}}{t-x}dt$ with $\omega>0$;
see \cite[Lem.~1]{ursell}.

\end{example}

\begin{example}\label{example:2rd}
As the second example, we consider
\begin{equation}\label{eq:example 2rd}
\dashint_{0}^{\infty}e^{i\omega t}\frac{\sqrt{t}}{(1+t)(t-x)}dt.
\end{equation}
Thus, $f(t)=\sqrt{t}/(1+t)$, which can be analytically extended to
the complex plane with a pole at $-1$ and a branch cut along the
negative axis. As $t\to 0^+$, we have
\begin{equation}
f(t)\sim\sqrt{t}\sum_{j=0}^{\infty}(-t)^j=\sum_{j=1}^{\infty}(-1)^{j+1}t^{j-\frac{1}{2}}.
\end{equation}
Hence, in the notation of \eqref{eq:asy of f}, $\alpha=1/2$,
$a_0=0$, $a_j=(-1)^{j+1}$, $j=1,2,\cdots$. An appeal to
\eqref{eq:asy x>0} and \eqref{eq:asy of Hadamard int} then gives
\begin{equation*}
\dashint_{0}^{\infty}e^{i\omega t}\frac{\sqrt{t}}{(1+t)(t-x)}dt\sim
i\pi e^{i \omega x}
\frac{\sqrt{x}}{1+x}-\sum_{\ell=1}^{\infty}\frac{\Gamma(\ell+\frac{1}{2})}{\omega^{\ell+\frac{1}{2}}}
e^{\frac{\pi}{2}(\ell+\frac{1}{2})i}\left(\sum_{j=1}^{\ell}\frac{(-1)^{j+\ell}}{x^{j}}\right)
\end{equation*}
for $x>0$, and
\begin{equation*}
\ddashint_{0}^{\infty}e^{i\omega t}\frac{\sqrt{t}}{t(1+t)}dt\sim
\sum\limits_{\ell=1}^{\infty}(-1)^{\ell+1}
e^{\frac{\pi}{2}(\ell-\frac{1}{2})i}
\frac{\Gamma(\ell-\frac{1}{2})}{\omega^{\ell-\frac{1}{2}}}.
\end{equation*}
\end{example}

\section{Computation of oscillatory Hilbert
transforms}\label{sec:computation} Although the asymptotic
expansions derived in the previous section provide essential
insights into the behaviour of the oscillatory Hilbert transforms
for large $\omega$, they are not suitable for computational purpose,
since asymptotic expansions are typically divergent, one can not
simply keep on adding terms of the expansion in order to improve the
accuracy of the approximation. In this section we shall focus on
fast numerical computation of oscillatory Hilbert transforms
\eqref{def:one-side transform}. According to the position of $x$, we
classify our discussion into three regimes, namely,
$x=\mathcal{O}(1)$ or $x\gg1$, $0<x\ll 1$ and $x=0$. Since these
regimes exhibit different asymptotic behaviour, they also require
different numerical methods.

\subsection{The regime $x=\mathcal{O}(1)$ or $x\gg1$}
If $x$ is not so close to the origin, say, $x=\mathcal{O}(1)$ or
$x\gg1$, the one-sided oscillatory Hilbert transform
\eqref{def:one-side transform} can be approximated efficiently using
the generalized Gauss-Laguerre quadrature rule. To see this, we
observe from \eqref{eq:tranfer trans} that
\begin{align}\label{eq:gauss-Laguerre}
H^{+}(f(t)e^{i\omega t})(x)&=i\pi f(x)e^{i\omega
x}+\frac{1}{\omega}\int_{0}^{\infty}e^{-q}\frac{f(\frac{iq}{\omega})}{\frac{q}{\omega}+ix}dq
\nonumber \\
&=i\pi f(x)e^{i\omega x}+\frac{\exp(-\frac{\alpha}{2}\pi
i)}{\omega^{1-\alpha}}\int_{0}^{\infty}q^{-\alpha}e^{-q}\frac{f_\alpha(\frac{iq}{\omega})}{\frac{q}{\omega}+ix}dq,
\end{align}
with $f_\alpha(t)=t^{\alpha}f(t)$ and where $\alpha$ is defined as
in \eqref{eq:asy of f}. We have made use of the change of variable
$q=\omega p$ in the first equality. On account of \eqref{eq:asy of
f}, it is easily seen that
$f_\alpha(\frac{iq}{\omega})/(\frac{q}{\omega}+ix)$
behaves like a polynomial near the origin, if $x=\mathcal{O}(1)$ or
$x\gg1$. This is exactly the situation that can be handled by the
generalized Gauss-Laguerre quadrature rule. Let
$\{t_j,w_j\}_{j=1}^{n}$ be the nodes and weights of the generalized
Gauss-Laguerre quadrature rule with respect to the weight
$t^{-\alpha}e^{-t}$, with $0\leq\alpha<1$. Then the one-sided
oscillatory Hilbert transform $H^{+}(f(t)e^{i\omega t})(x)$ is
approximated by
\begin{align}\label{eq:Laguerre quadrature rule}
Q_n(f,\omega,x)=i\pi f(x)e^{i\omega
x}+\frac{\exp(-\frac{\alpha}{2}\pi
i)}{\omega^{1-\alpha}}\sum_{k=1}^{n}w_k\frac{f_\alpha(\frac{it_k}{\omega})}{\frac{t_k}{\omega}+ix}.
\end{align}
Applying the error expression of the $n$-point generalized
Gauss-Laguerre quadrature rule \cite[p. 223]{Davis}, we can
estimate, for each fixed $x$, the quadrature error as follows:
\begin{align}
H^{+}(f(t)e^{i\omega
t})(x)-Q_n(f,\omega,x)&=\frac{\exp(-\frac{\alpha}{2}\pi
i)}{\omega^{1-\alpha}}\left(\int_{0}^{\infty}q^{-\alpha}e^{-q}\frac{f_\alpha(i\frac{q}{\omega})}{\frac{q}{\omega}+ix}dq
-\sum_{k=1}^{n}w_k\frac{f_\alpha(\frac{it_k}{\omega})}{\frac{t_k}{\omega}+ix}
\right)  \nonumber \\
&=\frac{\exp(-\frac{\alpha}{2}\pi
i)}{\omega^{1-\alpha}}\frac{n!\Gamma(n-\alpha+1)}{(2n)!}\left(\frac{f_\alpha(i\frac{q}{\omega})}{\frac{q}{\omega}+ix}\right)^{(2n)}\bigg|_{q=\xi}
\nonumber \\
&=\mathcal{O}(\omega^{-2n-1+\alpha})
\end{align}
for some constant $\xi\in\mathbb{C}$ as $\omega\rightarrow\infty$.
Note that one gains the factor $\omega^{-2n}$ from taking the
$(2n)$-th order derivative of a function of $q$ that depends only on
$q/\omega$. We observe that the accuracy of the quadrature rule
(3.2) rapidly improves as $\omega$ grows. This result may come as a
surprise here, since we started out with a highly oscillatory
integral that typically requires a growing number of quadrature
points to evaluate as the frequency increases. Yet, we like to point
out that this result is entirely parallel to the case of finite
Fourier integrals of the form \eqref{eq:Fourier integrals}, where an
application of Gauss-Laguerre yields similar behaviour for large
$\omega$. In what follows, we give several examples to illustrate
the convergence of our quadrature rule $Q_n(f,\omega,x)$. Throughout
this paper, all the computations have been performed using Maple 14
with 32-digit arithmetic\footnote{The use of increased precision is
just to show the convergence rates of our methods.}.

\begin{example}
Let us consider \eqref{eq:exp example} with $c=1$, that is,
$f(t)=e^{-t}$, and one has $\alpha=0$ in \eqref{eq:Laguerre
quadrature rule}. The exact solution of \eqref{eq:exp example} can
be obtained by using the definition of the Cauchy principal value
integral and taking the series expansion of the exponential
function. The result is
\begin{align}\label{eq: exact solution}
&\dashint_{0}^{\infty}e^{i\omega t}\frac{e^{-ct}}{t-x}dt \nonumber
\\&=
e^{(-c+i\omega)x}\bigg[\mathrm{Ei}(1,(c-i\omega)x)-2\sum_{\ell=0}^{\infty}
\frac{c^{2\ell+1}}{(2\ell+1)!}\left(\sum_{k=0}^{2\ell}k!\binom{2\ell}{k}\frac{x^{2\ell-k}}{\omega^{k+1}}\sin(\omega
x+\frac{k\pi}{2})\right) \nonumber \\
&~~~~+i\left\{2\mathrm{Si}(\omega x)-2\sum_{\ell=0}^{\infty}
\frac{c^{2\ell+2}}{(2\ell+2)!}\left(\sum_{k=0}^{2\ell+1}
k!\binom{2\ell+1}{k}\frac{x^{2\ell+1-k}}{\omega^{k+1}}\cos(\omega
x+\frac{k\pi}{2})\right)\right\}\bigg],
\end{align}
where
\begin{equation}
\mathrm{Si}(x)=\int_0^{x}\frac{\sin t}{t}dt,\qquad x \geq 0,
\end{equation}
is the sine integral; cf. \cite{Abram}. In the case of $c=0$, one
can check that
\begin{align}\label{eq:c=0}
\dashint_{0}^{\infty}\frac{e^{i\omega t}}{t-x}dt &=e^{i\omega
x}\left(\mathrm{Ei}(1,-i\omega x)+i 2 \mathrm{Si}(\omega x) \right)
=e^{i\omega x} \left(i \pi + \mathrm{Ei}(1,i\omega x)\right).
\end{align}
We compute the error for $x=1$ and $x=5$ with different frequency
$\omega$ and $n$ ranging from $2$ to $10$. The results are
illustrated in Figure \ref{fig:e^t}.
\begin{figure}[h]
\centering
\begin{overpic}
[width=5.5cm]{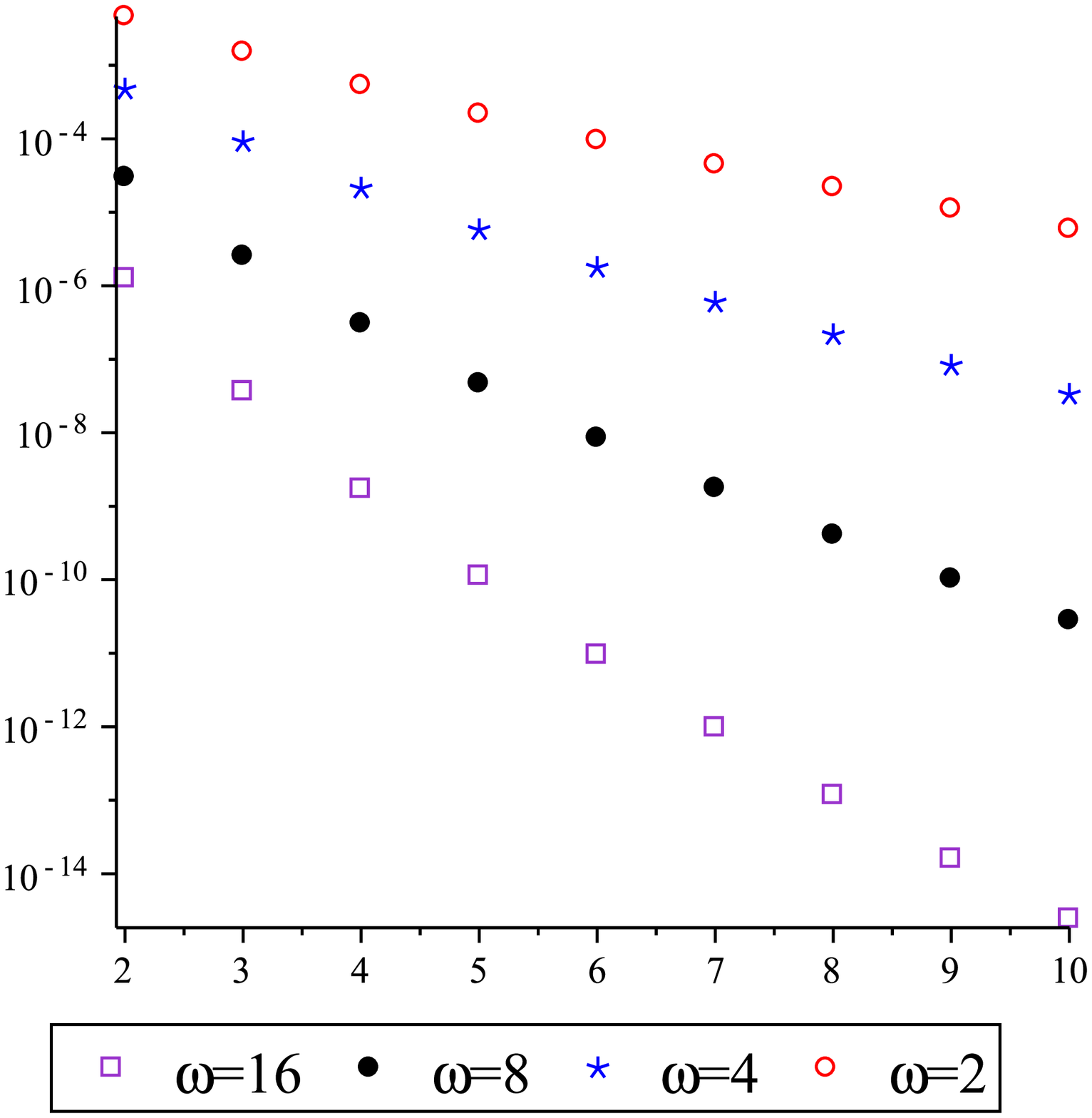} 
\end{overpic}
\qquad
\begin{overpic}
[width=5.5cm]{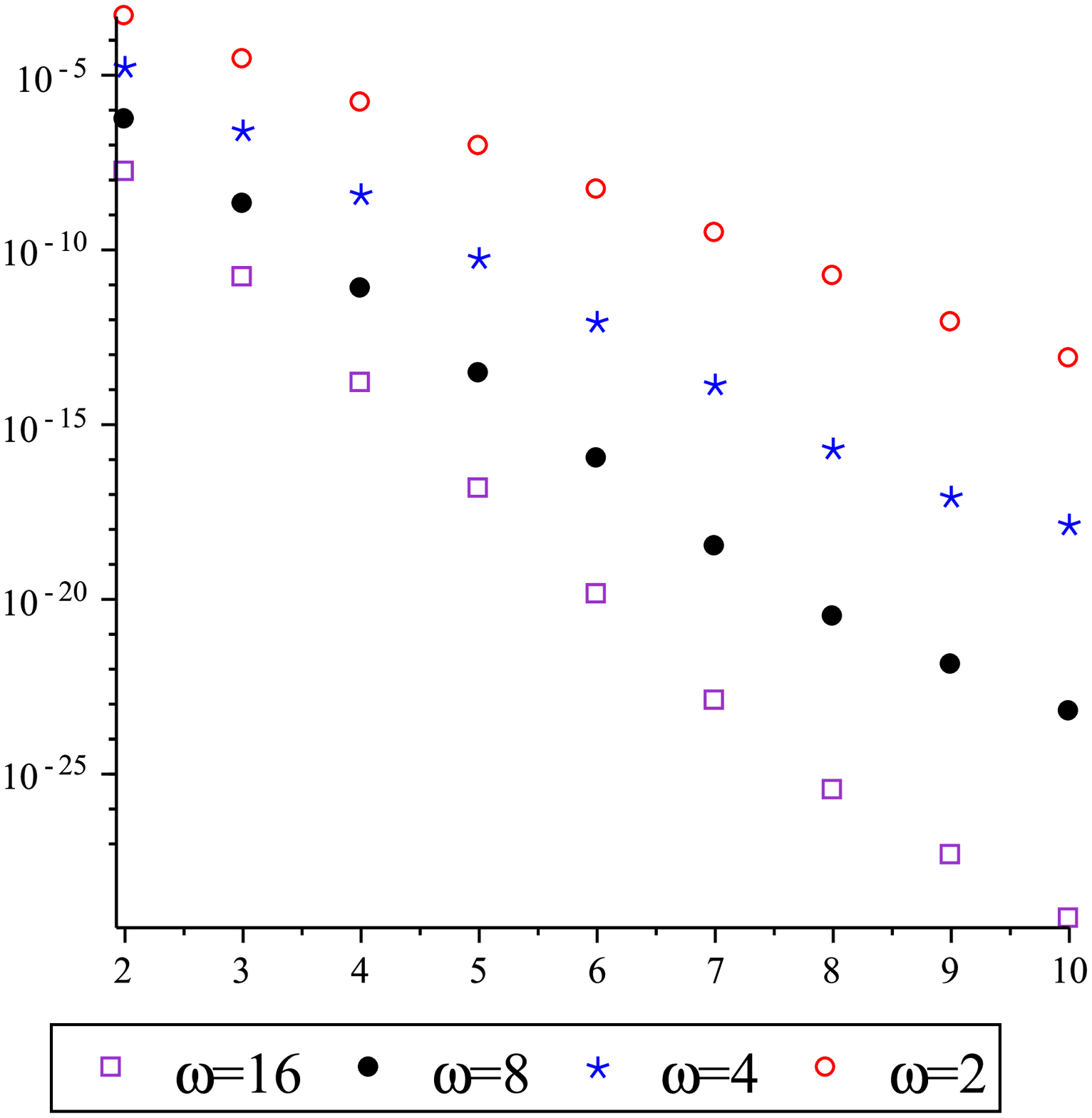} 
\end{overpic}
\caption{The error of the quadrature $Q_n(f,\omega,x)$ for $x=1$
(left) and $x=5$ (right) with $f(t)=e^{-t}$ and $n$ ranging from $2$
to $10$.} \label{fig:e^t}
\end{figure}
\end{example}

\begin{example}
We next consider the function $f(t)=\frac{\cos t}{\sqrt[3]{t}}$,
which grows exponentially in the complex plane. We can check that it
satisfies \eqref{eq:growth condition} with $d=1$, and
$\alpha=\frac{1}{3}$ in \eqref{eq:Laguerre quadrature rule}. The
error is illustrated in Figure \ref{fig:double pole R1}.
\begin{figure}[h]
\centering
\begin{overpic}
[width=5.5cm]{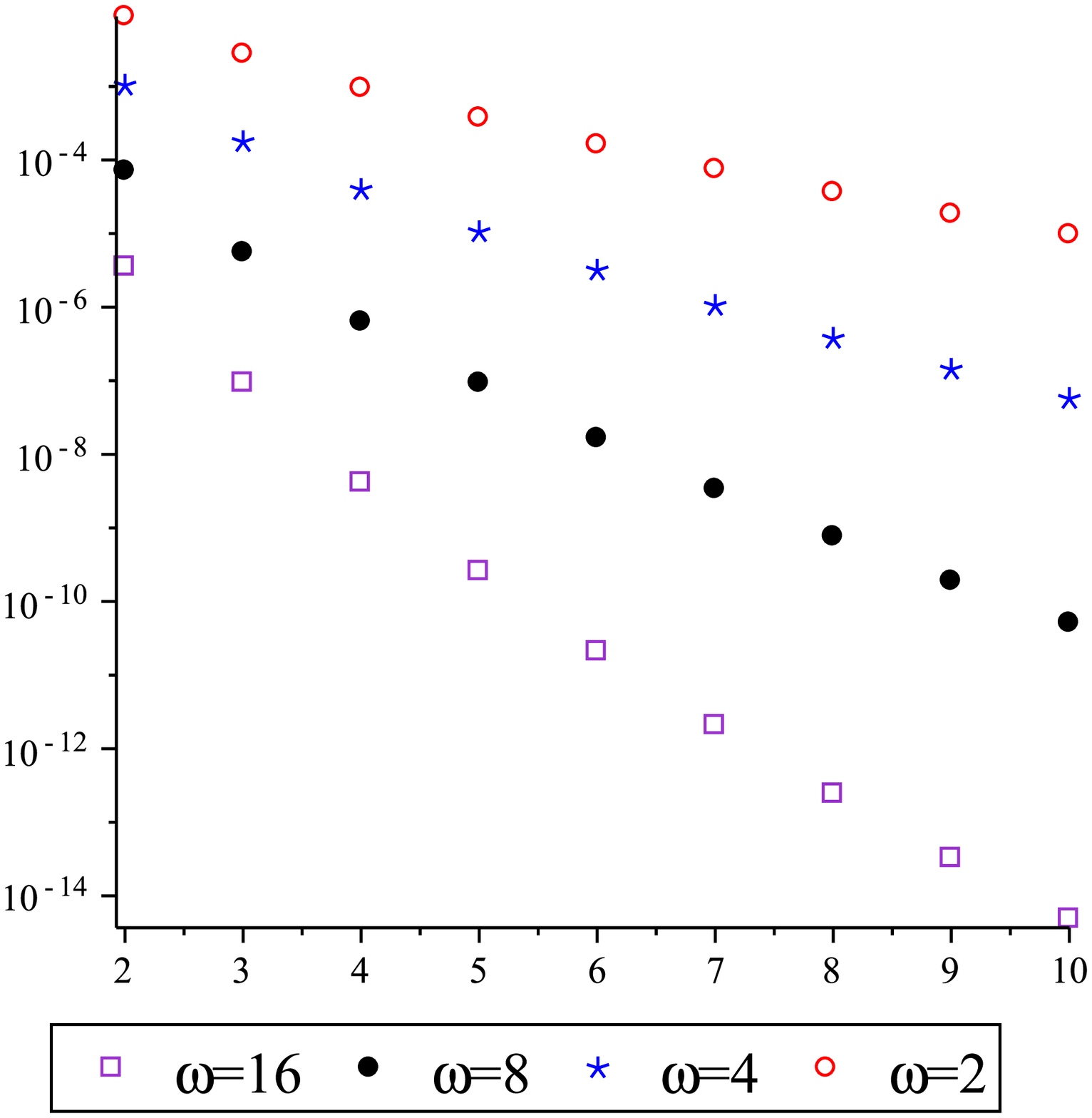}
\end{overpic}
\qquad
\begin{overpic}
[width=5.5cm]{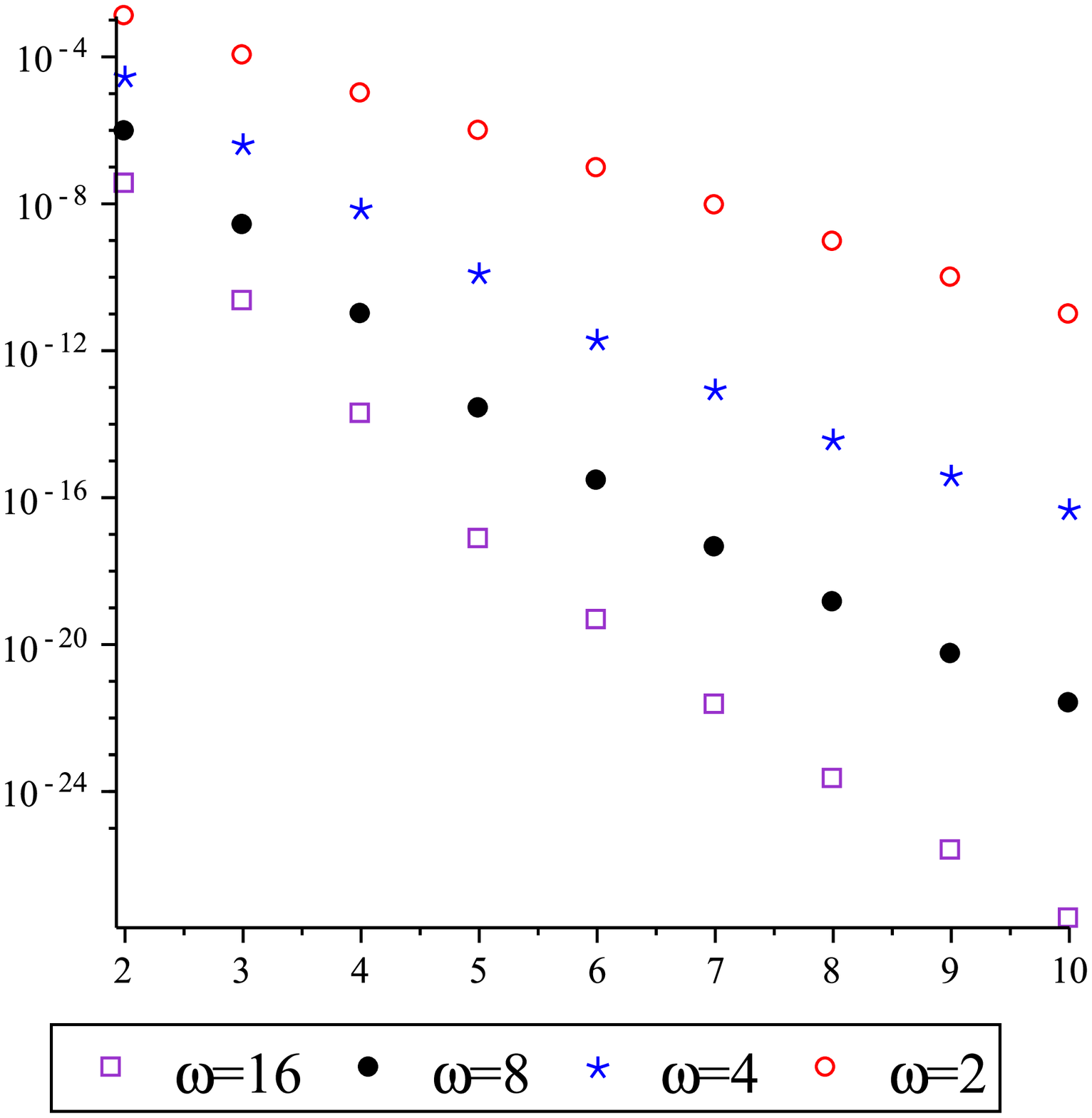}
\end{overpic}
 \caption{The error of the quadrature $Q_n(f,\omega,x)$ for $x=1$ (left) and $x=5$ (right) with  $f(t)=\frac{\cos t}{\sqrt[3]{t}}$ and $n$ ranging
from $2$ to $10$.} \label{fig:double pole R1}
\end{figure}
\end{example}

\begin{example}
We return to Example \ref{example:2rd}, i.e., $f(t)=\sqrt{t}/(1+t)$,
which corresponds to $\alpha=1/2$. Numerical results are displayed
in Figure \ref{fig:branch case R1}.
\begin{figure}[h]
\centering
\begin{overpic}
[width=5.5cm]{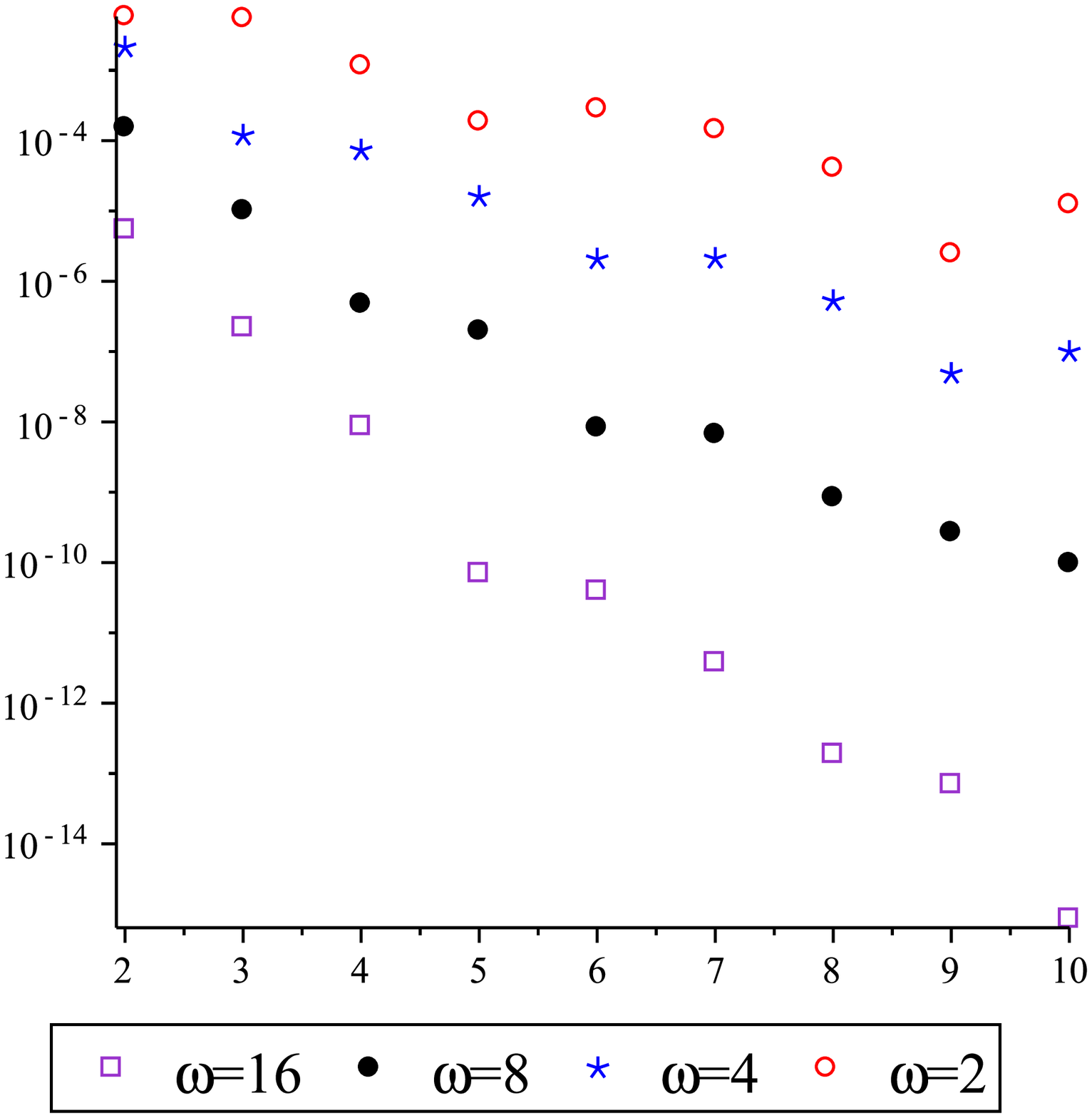}
\end{overpic}
\qquad
\begin{overpic}
[width=5.5cm]{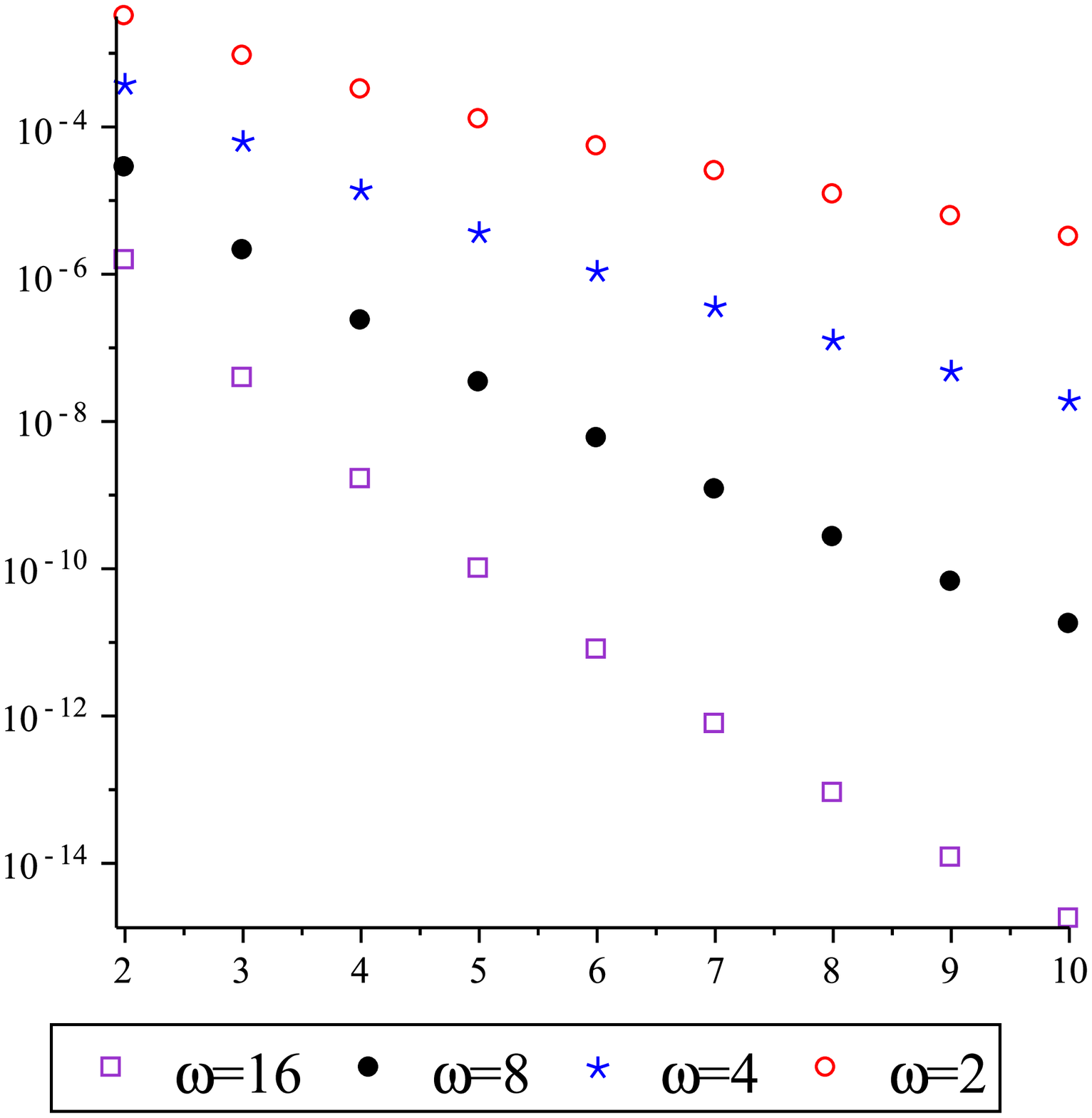}
\end{overpic}
 \caption{The error of the quadrature $Q_n(f,\omega,x)$ for $x=1$ (left) and $x=5$ (right) with $f(t)=\frac{\sqrt{t}}{1+t}$ and $n$ ranging
from $2$ to $10$.} \label{fig:branch case R1}
\end{figure}
\end{example}

All these examples show that the accuracy of quadrature rule
\eqref{eq:Laguerre quadrature rule} improves rapidly as $n$
increases. Meanwhile, the convergence is faster for larger $\omega$.

\subsection{The regime $0<x\ll1$}
When $x$ is close the origin, i.e., $0<x\ll1$, the accuracy of the
generalized Gauss-Laguerre rule deteriorates since the integrand on
the right hand side of \eqref{eq:gauss-Laguerre} is nearly singular.
To this end, we make the following decomposition of oscillatory
Hilbert transforms:
\begin{align}\label{eq:decomposition R2}
H^{+}(f(t)e^{i\omega t})(x)
=\dashint_{0}^{a}\frac{f(t)}{t-x}e^{i\omega
t}dt+\int_{a}^{\infty}\frac{f(t)}{t-x}e^{i\omega t}dt,
\end{align}
where $a$ is a positive number larger than $x$. Let the two
integrals on the right hand side of \eqref{eq:decomposition R2} be
denoted by $I_1(x)$ and $I_2(x)$, respectively. Note that the
integral $I_2(x)$ is no longer singular. A fast computation of
oscillatory Hilbert transforms in this regime is then reduced to the
numerical study of $I_1(x)$ and $I_2(x)$, which will be discussed in
the next two sections.

\subsubsection{Computation of $I_1(x)$}
The integral $I_1(x)$ is a finite oscillatory Hilbert transform. By
scaling the interval of integration to $(-1,1)$, we have
\begin{align}\label{eq:scaling}
I_1(x)&=\dashint_{0}^{a}\frac{f(t)}{t-x}e^{i\omega t}dt\nonumber \\
&=e^{i\tilde
\omega}\dashint_{-1}^{1}\frac{f(\frac{a}{2}(y+1))}{y-\tau}e^{i\tilde\omega
y}dy \nonumber \\
&=\left(\frac{a}{2}\right)^{-\alpha}e^{i\tilde{\omega}}\dashint_{-1}^{1}\frac{(y+1)^{-\alpha}h(y)}{y-\tau}e^{i\tilde{\omega}y}dy,
\end{align}
where $\tilde{\omega}=\frac{\omega a}{2}$,
$\tau=\frac{2x}{a}-1\in(-1,1)$, and
\begin{equation}
h(y)=f_{\alpha}(\frac{a}{2}(y+1))=\left(\frac{a}{2}(y+1)\right)^{\alpha}f\left(\frac{a}{2}(y+1)\right).
\end{equation}


The computation of \eqref{eq:scaling} with $\alpha=0$ has been
discussed in \cite{wang1}. Let $p_N(y)$ be the Lagrange polynomial
which interpolates $h(y)$ at the Clenshaw-Curtis points
$y_j=\cos\left(\frac{j\pi}{N}\right)$, $j=0,\ldots,N$. Then one has
(see \cite{Clenshaw})
\begin{align}\label{eq: ChebyOp}
p_N(y)=\sum_{k=0}^{N}{''}a_k^NT_k(y)
\end{align}
with
\begin{align}\label{def:akN}
a_k^N=\frac{2}{N}\sum_{j=0}^{N}{''}h(y_j)T_j(y_k), \quad
k=0,\ldots,N,
\end{align}
where the double prime denotes a sum whose first and last terms are
halved and $T_j(y)$ is the Chebyshev polynomial of the first kind of
degree $j$. The coefficients $a_k^N$ can be computed efficiently by
FFT \cite{gentleman}. Replacing $h(y)$ in \eqref{eq:scaling} by
$p_N(y)$, we have
\begin{align}\label{eq:previous methods}
\dashint_{-1}^{1}\frac{f(y)}{y-\tau}e^{i\tilde{\omega} y}dy & \simeq
\dashint_{-1}^{1}\frac{p_N(y)}{y-\tau}e^{i\tilde{\omega} y}dy \nonumber\\
&=\dashint_{-1}^{1}\frac{p_N(y)-p_N(\tau)}{y-\tau}e^{i\tilde{\omega}
y}dy + p_N(\tau)\dashint_{-1}^{1}\frac{e^{i\tilde{\omega}
y}}{y-\tau}dy \nonumber\\
&=
2\sum_{k=0}^{N}{''}a_k^N\left[\sum_{n=0}^{k-1}{'}T_n(\tau)M_{k-1-n}\right]
+ p_N(\tau)\dashint_{-1}^{1}\frac{e^{i\tilde{\omega} y}}{y-\tau}dy,
\end{align}
where
\begin{align*}
M_n=\int_{-1}^{1}U_n(y)e^{i\tilde{\omega }y}dy, \quad n\geq0,
\end{align*}
and where $U_n(y)$ is the Chebyshev polynomial of the second kind of
degree $n$. The last integral in \eqref{eq:previous methods} can be
computed in closed form and the $M_n$ can be computed by using a
three-term recurrence relation. The advantage of this method is that
it converges uniformly with respect to $\tau\in(-1,1)$ as
$N\rightarrow\infty$, provided $h(y)$ is analytic in a small
neighborhood containing $[-1,1]$. However, this method costs
$\mathcal{O}(N^2)$ operations which make it inefficient for large
$N$.

In the following we shall present a more efficient implementation,
which costs only $\mathcal{O}(N\log_2N)$ operations, to compute the
finite oscillatory Hilbert transform \eqref{eq:scaling}. The key
observation is that we can write $(p_N(y)-p_N(\tau))/(y-\tau)$ in
terms of $T_k(y)$ and this process can be performed in only
$\mathcal{O}(N)$ operations.

We approximate $I_1(x)$ by
\begin{align}\label{def: finiteoscill}
Q_{N}^{(1)}(f,\tilde{\omega},x)&=\left(\frac{a}{2}\right)^{-\alpha}e^{i\tilde
\omega}\left(\dashint_{-1}^{1}(y+1)^{-\alpha}\frac{p_N(y)}{y-\tau}e^{i\tilde{\omega}y}dy\right)
\nonumber \\
&=\left(\frac{a}{2}\right)^{-\alpha}e^{i\tilde
\omega}\bigg(\int_{-1}^{1}(y+1)^{-\alpha}\frac{p_N(y)-p_N(\tau)}{y-\tau}e^{i\tilde{\omega}y}dy \nonumber \\
&~~~~~~~~~~~~~~~~~~~~~+p_N(\tau)\dashint_{-1}^{1}(y+1)^{-\alpha}\frac{e^{i\tilde{\omega}y}}{y-\tau}dy
\bigg).
\end{align}
We next expand $(p_N(y)-p_N(\tau))/(y-\tau)$ in terms of $T_k(y)$
and obtain
\begin{align}\label{eq:expand Pny-pntau}
\frac{p_N(y)-p_N(\tau)}{y-\tau}=\sum_{k=0}^{N-1}{'}b_k^NT_k(y),
\end{align}
where the prime denotes the summation whose first term is halved.
The coefficients $b_k^N$ satisfy a three-term recurrence relation
\begin{align}
b_{k-1}^N=2a_k^N+2\tau b_k^N-b_{k+1}^N, \quad k=N-1,\ldots,1,
\end{align}
and the first two initial values are given by $b_N^N=0$ and
$b_{N-1}^{N}=a_N^N$; see \cite{hasegawa}. Inserting \eqref{eq:expand
Pny-pntau} into \eqref{def: finiteoscill} gives us
\begin{align}\label{def:modified CC methods}
Q_N^{(1)}(f,\tilde{\omega},x)
&=\left(\frac{a}{2}\right)^{-\alpha}e^{i\tilde
\omega}\left(\sum_{k=0}^{N-1}{'}b_k^NZ_k^{(\alpha)}+p_N(\tau)\dashint_{-1}^{1}(y+1)^{-\alpha}\frac{e^{i\tilde{\omega}y}}{y-\tau}dy\right),
\end{align}
where
\begin{align}\label{def: Cheb moment1}
Z_k^{(\alpha)}:=Z_k(\alpha,\tilde{\omega})=\int_{-1}^{1}(y+1)^{-\alpha}T_k(y)e^{i\tilde{\omega}y}dy,
\quad k\geq0.
\end{align}
Now, we only need to compute the moments $Z_k^{(\alpha)}$ and the
integral on right hand side of \eqref{def:modified CC methods}
efficiently. It turns out that $Z_k^{(\alpha)}$ satisfy the
following four-term recurrence relation (see \cite{piessens}):
\begin{align}\label{eq: recurrence 4}
&i\tilde \omega(n-1)
Z_{n+1}^{(\alpha)}+[2(n-\alpha+1)(n-1)+i\tilde{\omega}(n-2)]Z_{n}^{(\alpha)}\nonumber
\\
&~~~+[2n(n+\alpha-2)-i\tilde{\omega}(n+1)]Z_{n-1}^{(\alpha)}-i\tilde{\omega}
nZ_{n-2}^{(\alpha)}=-2^{2-\alpha}e^{i\tilde{\omega}}
\end{align}
with the first three initial values given by
\begin{align}
Z_{0}^{(\alpha)}&=\frac{e^{-i\tilde{\omega}}}{\tilde{\omega}^{1-\alpha}}e^{\frac{1}{2}\pi
i(1-\alpha)}\gamma(1-\alpha,-2i\tilde{\omega}),\nonumber \\
Z_{1}^{(\alpha)}&=\frac{e^{-i\tilde{\omega}}}{\tilde{\omega}^{2-\alpha}}e^{\frac{1}{2}\pi
i(2-\alpha)}\gamma(2-\alpha,-2i\tilde{\omega})-Z_{0}^{(\alpha)},\nonumber \\
Z_{2}^{(\alpha)}&=\frac{2e^{-i\tilde{\omega}}}{\tilde{\omega}^{3-\alpha}}e^{\frac{1}{2}\pi
i(3-\alpha)}\gamma(3-\alpha,-2i\tilde{\omega})-4Z_{1}^{(\alpha)}-3Z_{0}^{(\alpha)},
\nonumber
\end{align}
where
\begin{align}\label{incomplete gamma2}
\gamma(a,z)=\int_{0}^{z}t^{a-1}e^{-t}dt, \qquad \Re(a)>0,
\end{align}
is the incomplete gamma function \cite[p. 260]{Abram}. The
recurrence relation \eqref{eq: recurrence 4} can be used to
calculate $Z_k^{(\alpha)}$ stably in the forward direction provided
$N\leq2\tilde{\omega}$. If $N>2\tilde{\omega}$, we can compute the
additional moments $Z_k^{(\alpha)}$, $2\tilde{\omega}< k\leq N$
stably by solving a boundary value problem with two starting values
and one ending value \cite{piessens}. To see this, let
$n_0:=[2\tilde{\omega}]$, where $[~]$ denotes the integer part, and
choose a positive integer $N_1\geq \max\{n_0,N\}$. We then define a
matrix $A=(a_{j,k})_{j,k=1}^{N_1-n_0+1}$ of size $N_1-n_0+1$ by
\begin{align}\label{def: matrix}
a_{j,k}=\left\{
               \begin{array}{ll}
                    i\tilde{\omega}(n_0+j-2), & \hbox{$k=j+1$,} \\
                    2(n_0-\alpha+j)(n_0+j-2)+i\tilde\omega(n_0+j-3), & \hbox{$k=j$,}\\
                    2(n_0+j-1)(n_0+\alpha+j-3)-i\tilde\omega(n_0+j), &\hbox{$k=j-1$,}\\
                    -i\tilde{\omega}(n_0+j-1), & \hbox{$k=j-2$,} \\
                    \end{array}
               \right.
\end{align}
and a vector $\vec{b}=(b_1,b_2,\ldots,b_{N_1-n_0+1})^{T}$ by
\begin{align}\label{def: right vector}
b_j=\left\{
\begin{array}{ll}
                    -2^{2-\alpha}e^{i\tilde{\omega}}-(2n_0(n_0+\alpha-2)-i\tilde{\omega}(n_0+1))Z_{n_0-1}^{(\alpha)}+i\tilde{\omega} n_0Z_{n_0-2}^{(\alpha)}, & \hbox{$j=1$,} \\
                    -2^{2-\alpha}e^{i\tilde{\omega}}+i\tilde{\omega}(n_0+1)Z_{n_0-1}^{(\alpha)}, & \hbox{$j=2$,}\\
                    -2^{2-\alpha}e^{i\tilde{\omega}}, &\hbox{$j=3,\ldots, N_1-n_0$,}\\
                    -2^{2-\alpha}e^{i\tilde{\omega}}-i\tilde{\omega}(N_1-1)Z_{N_1+1}^{(\alpha)}, & \hbox{$j=N_1-n_0+1$.} \\
                    \end{array}
\right.
\end{align}
where the superscript $^T$ stands for transpose. Here, the moments
$Z_{n_0-2}^{(\alpha)}$ and $Z_{n_0-1}^{(\alpha)}$ can be obtained by
using the forward recurrence \eqref{eq: recurrence 4}, while the
value of $Z_{N_1+1}^{(\alpha)}$ in the last component can be set
equal to zero if the selected parameter $N_1$ is sufficiently large.
We have that the vector consisting of the additional moments defined
by
\begin{align*}
\vec{Z}=(Z_{n_0}^{(\alpha)},Z_{n_0+1}^{(\alpha)},\ldots,Z_{N_1}^{(\alpha)})^{T},
\end{align*}
is the unique solution of the linear system
\begin{align}\label{eq: linear system}
A\vec{x}=\vec{b}.
\end{align}

To compute the integral on right hand side of \eqref{def:modified CC
methods}, we note that
\begin{align}\label{eq: closed form}
\dashint_{-1}^{1}(y+1)^{-\alpha}\frac{e^{i\tilde{\omega}y}}{y-\tau}dy&=
\frac{e^{i\tilde\omega\tau}}{(1+\tau)^{\alpha}}(i\pi+e^{-i\alpha\pi}\Gamma(1-\alpha)\Gamma(\alpha,i\tilde\omega(1+\tau)))\nonumber\\
&~~~~~-\frac{ie^{i\tilde\omega}}{\tilde\omega}\int_{0}^{\infty}e^{-t}\frac{1}{(2+\frac{it}{\tilde\omega})^{\alpha}(1-\tau+\frac{it}{\tilde\omega})}dt,
\end{align}
where the last integral is free from singularity as $\tau\rightarrow
-1$, and can therefore be conveniently evaluated by Gauss-Laguerre
quadrature rule. For the case $\alpha=0$, it can be further written
in a closed form (see \cite{capobianco}):
\begin{align}\label{eq:explicit form}
\dashint_{-1}^{1}\frac{e^{i\tilde{\omega}y}}{y-\tau}dy&=\cos(\tilde{\omega}\tau)
\left[\mathrm{Ci}(u_1)-\mathrm{Ci}(u_2)\right]
-\sin(\tilde{\omega}\tau)[\mathrm{Si}(u_1)+\mathrm{Si}(u_2)]     \nonumber \\
&~~~
+i\left[\sin(\tilde{\omega}\tau)[\mathrm{Ci}(u_1)-\mathrm{Ci}(u_2)]
+\cos(\tilde{\omega}\tau)[\mathrm{Si}(u_1)+\mathrm{Si}(u_2)]\right],
\end{align}
where
\begin{equation}
\mathrm{Ci}(x)=-\int_x^{\infty}\frac{\cos t}{t}dt,\qquad x>0,
\end{equation}
is the cosine integral, $u_1=\tilde{\omega}(1-\tau)$ and
$u_2=\tilde{\omega}(1+\tau)$.

Based on the above discussion, we outline our algorithm with
estimation of computational complexity as follows:
\newcounter{alg}
\begin{list}{\arabic{alg}.}
{\usecounter{alg}\setlength{\parsep}{1ex}\setlength{\itemsep}{1ex}}
\item[] {\bf Algorithm I:} Computation of $I_1(x)$
\item   Choose a positive constant $a$ larger than $x$ and $N_1>\max\{n_0,N\}$.
\item   Compute the coefficients $\{a_k^N\}_{k=0}^{N}$ in \eqref{def:akN} by using FFT
with $\mathcal{O}(N\log_2 N)$ operations.
\item  If $N\leq2\tilde\omega$, evaluate the moments $\{Z_k^{(\alpha)}\}_{k=0}^{N-1}$ from the recurrence relation
\eqref{eq: recurrence 4} in $\mathcal{O}(N)$ operations. If
$N>2\tilde\omega$, we compute the additional moments
$\{Z_k^{(\alpha)}\}_{k=n_0}^{N-1}$ by solving \eqref{eq: linear
system}, in $\mathcal{O}(N_1)$ operations\footnote{This can be done with a minor adaptation of Oliver's method for the LU decomposition of a tridiagonal matrix \cite{oliver}. We omit the details.}.  
\item  Compute $p_N(\tau)$ from its barycentric form in
$\mathcal{O}(N)$ operations \cite{Berrut}.
\item  Calculate $Q_{N}^{(1)}(f,\tilde{\omega},x)$ in $\mathcal{O}(N)$
operations by the Clenshaw algorithm:
\begin{align*}
& S_{-1}=0, S_0=\frac{1}{2}Z_0^{(\alpha)}, W_0=0,\\
&  \begin{cases}
     W_k=W_{k-1}+2a_k^NS_{k-1}, \quad k=1,2,\ldots,N-1,\\
     S_k=Z_k^{(\alpha)}+2\tau S_{k-1}-S_{k-2},\\
  \end{cases}\\
&
Q_{N}^{(1)}(f,\tilde{\omega},x)=\left(\frac{a}{2}\right)^{-\alpha}e^{i\tilde{\omega}}\left(W_{N-1}+S_{N-1}a_N^N+p_N(\tau)\dashint_{-1}^{1}(y+1)^{-\alpha}\frac{e^{i\tilde{\omega}y}}{y-\tau}dy\right).
\end{align*}
The integral
$\dashint_{-1}^{1}(y+1)^{-\alpha}\frac{e^{i\tilde{\omega}y}}{y-\tau}dy$
can be evaluated efficiently from \eqref{eq: closed form} by using a
Gauss-Laguerre quadrature rule.
\end{list}

The total computational complexity of computing $I_1(x)$ is
$\mathcal{O}(N\log_2N)$ operations if $N\leq2\tilde\omega$ and is
$\mathcal{O}(N\log_2N) + \mathcal{O}(N_1)$ operations if
$N>2\tilde\omega$. Therefore, even when we choose
$N_1=\mathcal{O}(N\log_2N)$, the total computational complexity of
computing $I_1(x)$ is still $\mathcal{O}(N\log_2 N)$. Meanwhile,
this cost is independent of $\omega$.

In the case $\alpha=0$, i.e., $f$ is analytic in the neighborhood of
the origin, the above algorithm can be further simplified. Indeed,
if $\alpha=0$, we have
\begin{align}\label{eq:Zk and Mk}
Z_k^{(0)}=\frac{1}{i\tilde{\omega}}\left(e^{i\tilde{\omega}}-(-1)^ke^{-i\tilde{\omega}}\right)-\frac{k}{i\tilde{\omega}}M_{k-1},
\quad  k\geq1.
\end{align}
Since $M_k$ satisfies a three-term recurrence relation (see
\cite{dominguez,wang1})
\begin{align}\label{eq: recuurence 2}
M_l+\frac{2l}{i\tilde{\omega}}M_{l-1}-M_{l-2}=
\frac{2}{i\tilde{\omega}}\left(e^{i\tilde{\omega}}-(-1)^le^{-i\tilde{\omega}}\right),
\quad l\geq 2,
\end{align}
with initial values $M_0=\frac{2\sin\tilde\omega}{\tilde\omega}$,
$M_1=4i(\frac{\sin\tilde\omega}{\tilde\omega^2}-\frac{\cos\tilde\omega}{\tilde\omega})$,
the four-term recurrence relation \eqref{eq: recurrence 4} can
readily be reduced to a three-term recurrence relation. In practice,
one can evaluate $M_k$ from \eqref{eq: recuurence 2} for $N\leq
\tilde{\omega}$. If $N>\tilde{\omega}$, the additional moments
$M_k$, $\tilde{\omega}<k\leq N$ can be evaluated stably by solving a
tridiagonal system \cite{dominguez}. We then have the following
simpler algorithm:
\begin{list}{\arabic{alg}.}
{\usecounter{alg}\setlength{\parsep}{1ex}\setlength{\itemsep}{1ex}}
\item[] {\bf Algorithm II:} Computation of $I_1(x)$ with $\alpha=0$
\item   Choose a positive constant $a$ larger than $x$.
\item   Compute the coefficients $\{a_k^N\}_{k=0}^{N}$ by using FFT in
$\mathcal{O}(N\log_2 N)$ operations.
\item  Evaluate the moments $\{M_k\}_{k=0}^{N-1}$ by \eqref{eq: recuurence 2} if $N\leq\tilde\omega$. If $N>\tilde\omega$, we compute
the additional moments $\{M_k\}_{k=[\widetilde{\omega}]}^{N-1}$ by
the second phase algorithm in \cite{dominguez}. Then we evaluate
$\{Z_k^{(0)}\}_{k=0}^{N-1}$ from \eqref{eq:Zk and Mk}, in
$\mathcal{O}(N)$ operations.  
\item  Compute $p_N(\tau)$ from its barycentric form in
$\mathcal{O}(N)$ operations \cite{Berrut}.
\item  Calculate $Q_{N}^{(1)}(f,\tilde{\omega},x)$ in $\mathcal{O}(N)$ operations
by the Clenshaw algorithm:
\begin{align*}
& S_{-1}=0, S_0=\frac{1}{2}Z_0^{(0)}, W_0=0,\\
&  \begin{cases}
     W_k=W_{k-1}+2a_k^NS_{k-1}, \quad k=1,2,\ldots,N-1,\\
     S_k=Z_k^{(0)}+2\tau S_{k-1}-S_{k-2},\\
  \end{cases}\\
&
Q_{N}^{(1)}(f,\tilde{\omega},x)=e^{i\tilde{\omega}}\left(W_{N-1}+S_{N-1}a_N^N+p_N(\tau)\dashint_{-1}^{1}\frac{e^{i\tilde{\omega}y}}{y-\tau}dy\right).
\end{align*}
The integral
$\dashint_{-1}^{1}\frac{e^{i\tilde{\omega}y}}{y-\tau}dy$ can be
evaluated from \eqref{eq:explicit form}.
\end{list}

The computational complexity is $\mathcal{O}(N\log_2N)$ operations.

\subsubsection{Computation of $I_2(x)$}
The computation of $I_2(x)$ is relatively easy, since the integrand
does not have singularity. Under the same assumptions as in Lemma
\ref{lem:transfer transform}, it is readily seen that
\begin{align}\label{eq:I2}
I_2(x)=\int_{a}^{\infty}\frac{f(t)}{t-x}e^{i\omega t}dt=ie^{i\omega
a}\int_{0}^{\infty}\frac{f(a+ip)}{a-x+ip}e^{-\omega p}dp.
\end{align}
As $a$ is an arbitrary real number larger than $x$, we may select it
so that the integrand of the last integral in \eqref{eq:I2} is well
behaved. Thus, this integral can be computed efficiently by
Gauss-Laguerre quadrature rule:
\begin{align}\label{def: GaussLaguerre2}
I_2(x)\simeq Q_n^{(2)}(f,\omega,x)=\frac{ie^{i\omega
a}}{\omega}\sum_{k=1}^{n}w_k\frac{f(a+\frac{it_k}{\omega})}{a-x+\frac{it_k}{\omega}},
\end{align}
where $\{t_k,w_k\}$ are the nodes and weights of the Gauss-Laguerre
quadrature associated with the weight $e^{-t}$.

\begin{example}
We apply quadrature rule $Q_n^{(2)}(f,\omega,x)$ in \eqref{def:
GaussLaguerre2} to calculate $I_2(x)$ with $f(t)=\sqrt{t}/(1+t)$ and
$x=0.02$. The error is presented in Figure \ref{fig:error of I_2}.
We can see that more accurate approximations are obtained as $a$
increases. For each fixed $a$, more accurate approximations are
obtained as $\omega$ increases.
\begin{figure}[h]
\centering
\begin{overpic}
[width=4.5cm]{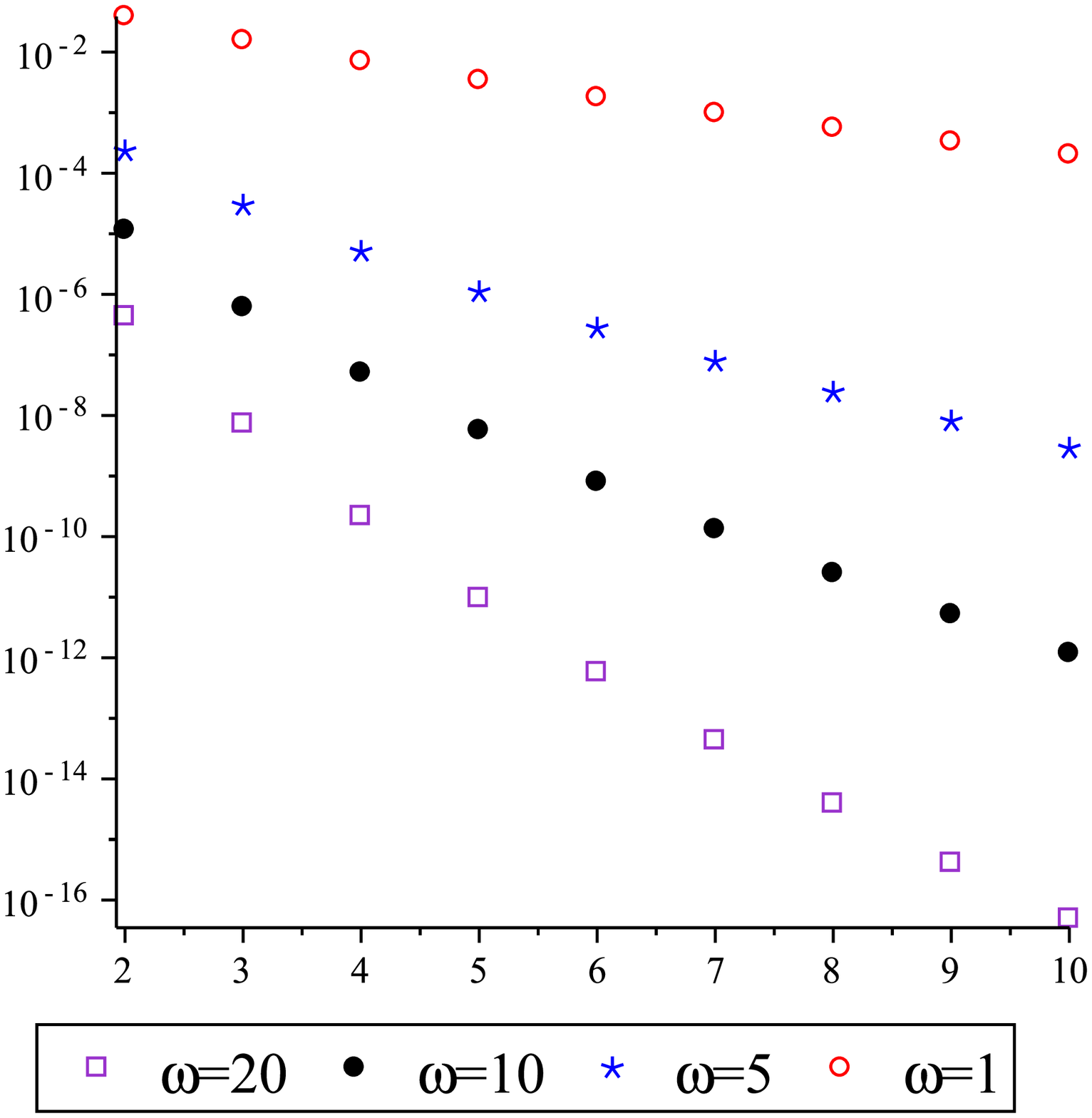}
\end{overpic}
\quad
\begin{overpic}
[width=4.5cm]{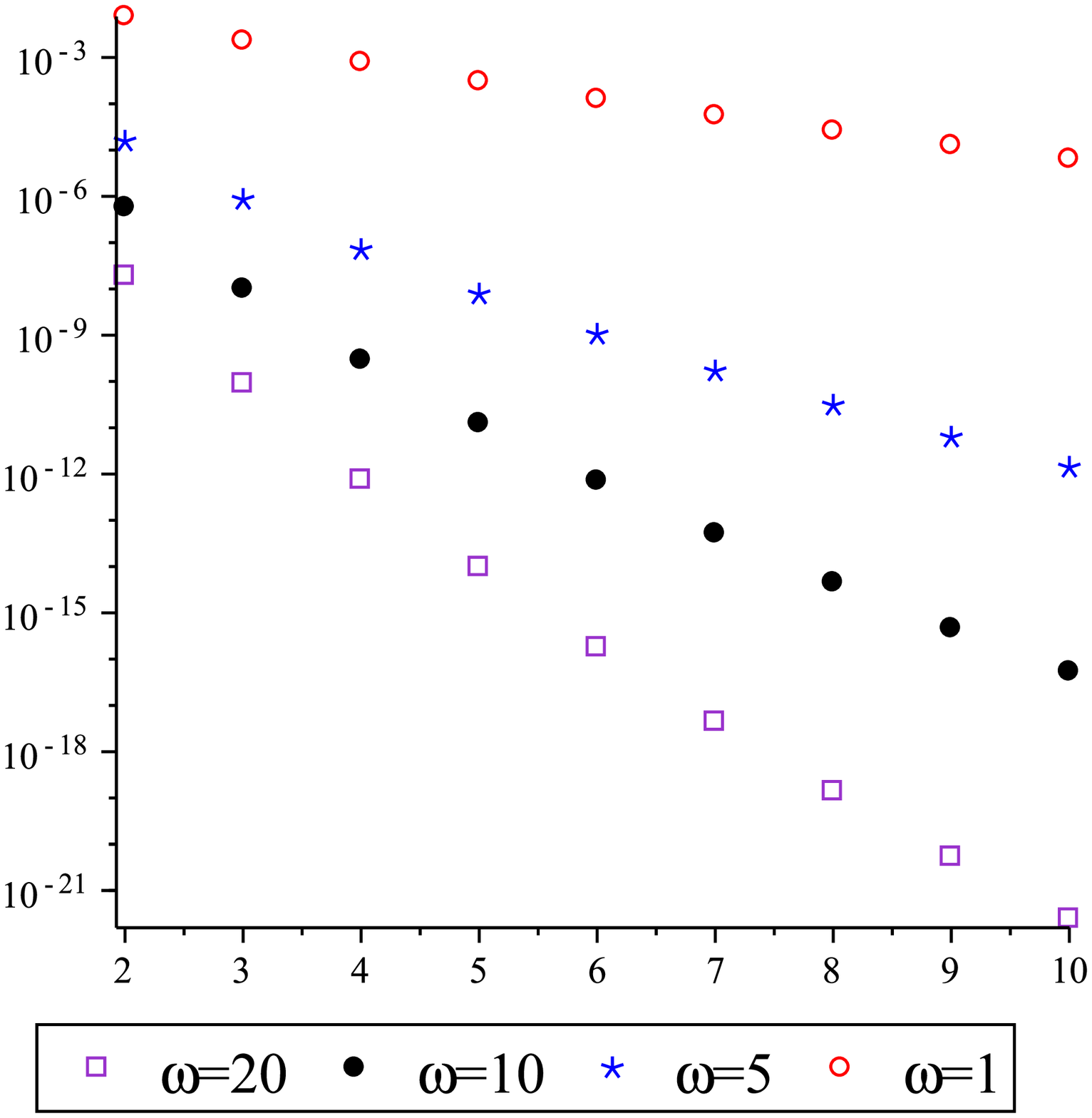}
\end{overpic}
\quad
\begin{overpic}
[width=4.5cm]{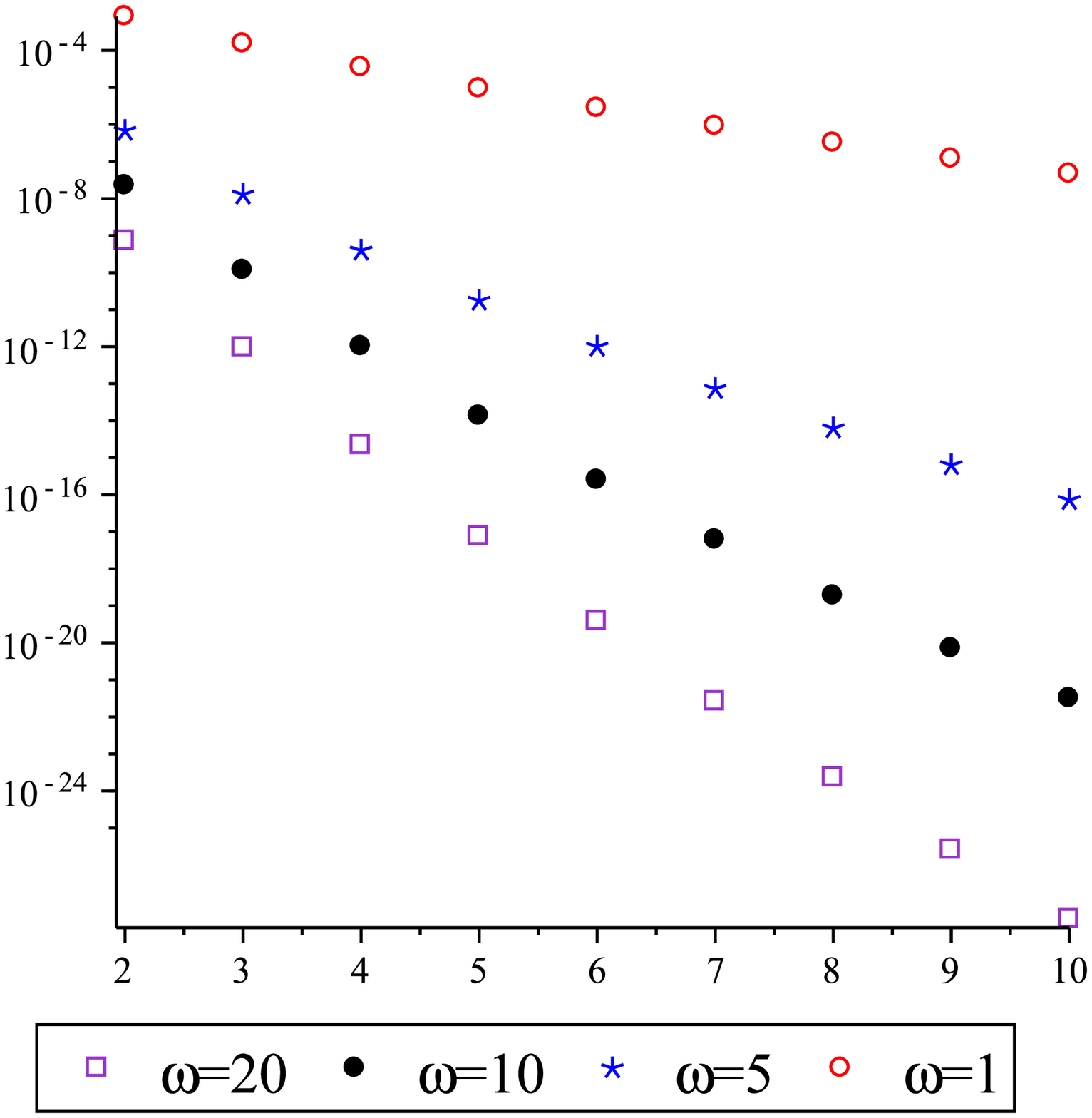}
\end{overpic}
\caption{The error of the quadrature $Q_n^{(2)}(f,\omega,x)$ for
$a=1$ (left), $a=2$ (middle) and $a=4$ (right) with
$f(t)=\sqrt{t}/(1+t)$ and $n$ ranging from $2$ to $10$. Here we
choose $x=0.02$.} \label{fig:error of I_2}
\end{figure}
\end{example}

\subsubsection{Numerical examples}
We present in this section some numerical experiments to illustrate
the efficiency of numerical methods presented in the above two
sections.

\begin{example} Let us consider the integral \eqref{eq:exp example} with $c=1$
and $x$ close to zero. The exact solution is given in \eqref{eq:
exact solution}. Since $\alpha=0$ in this case, we use Algorithm II
to compute $I_1(x)$, and the Gauss-Laguerre quadrature rule
$Q_n^{(2)}(f,\omega,x)$ defined in \eqref{def: GaussLaguerre2} to
compute $I_2(x)$. The absolute error for several values of $x$ is
presented in Table 1, which indicates the proposed method is
uniformly accurate as $x\rightarrow0$. The absolute error for
several values of $\omega$ with $a=1$ and $x=0.02$ is presented in
Table 2. We can see that the convergence is quite
rapid as $N$ and $n$ increase. 
\begin{table}[h] %
\caption{Absolute error in computing \eqref{eq:exp example} with
$c=1$, $a=1$, $\omega=10$ and $x=10^{-\delta}$.}\vspace{0.4cm}
\label{aggiungi}\centering%
\begin{tabular}{ccllll}
\toprule%
$n$  &   $N$    &  $\delta=1$     &   $\delta=2$   &  $\delta=3$  &
$\delta=4$
\\\toprule
4    & 4  &  1.22$\times 10^{-5}$    & 3.80$\times 10^{-5}$  & 3.52$\times 10^{-5} $ &3.42$\times 10^{-5}$  \\
     & 8  &  3.30$\times 10^{-7}$    & 1.83$\times 10^{-7}$  & 1.73$\times 10^{-7} $ &1.72$\times 10^{-7}$  \\
     & 16 &  3.30$\times 10^{-7}$    & 1.83$\times 10^{-7}$  & 1.73$\times 10^{-7} $ &1.72$\times 10^{-7}$  \\
8    & 4  &  1.19$\times 10^{-5}$    & 3.80$\times 10^{-5}$  & 3.52$\times 10^{-5} $ &3.41$\times 10^{-5}$  \\
     & 8  &  2.69$\times 10^{-10}$   & 1.09$\times 10^{-10}$ & 1.04$\times 10^{-10}$ &1.03$\times 10^{-10}$  \\
     & 16 &  2.86$\times 10^{-10}$   & 1.09$\times 10^{-10}$ & 9.96$\times 10^{-11}$ &9.87$\times 10^{-11}$  \\
16   & 4  &  1.19$\times 10^{-5}$    & 3.80$\times 10^{-5}$  & 3.52$\times 10^{-5} $ &3.41$\times 10^{-5}$  \\
     & 8  &  2.37$\times 10^{-11}$   & 2.65$\times 10^{-12}$ & 9.36$\times 10^{-12}$ &8.00$\times 10^{-12}$  \\
     & 16 &  1.30$\times 10^{-14}$   & 2.97$\times 10^{-15}$ & 2.58$\times 10^{-15}$ &2.54$\times 10^{-15}$  \\\bottomrule
\end{tabular}
\end{table}

\begin{table}[h] %
\caption{Absolute error in computing \eqref{eq:exp example} with
$c=1$, $a=1$, $x=0.02$ and several values of
$\omega$.}\vspace{0.4cm}
\label{aggiungi}\centering%
\begin{tabular}{ccllll}
\toprule%
$n$  &   $N$    &  $\omega=5$     &   $\omega=20$   &  $\omega=80$ &
$\omega=320$
\\\toprule
4    & 4  &  2.84$\times 10^{-5}$    & 2.30$\times 10^{-5}$   & 1.56$\times 10^{-5}  $ &1.73$\times 10^{-5} $  \\
     & 8  &  1.81$\times 10^{-5}$    & 8.92$\times 10^{-10}$  & 1.28$\times 10^{-11} $ &1.81$\times 10^{-11}$  \\
     & 16 &  1.81$\times 10^{-5}$    & 8.69$\times 10^{-10}$  & 4.89$\times 10^{-15} $ &1.92$\times 10^{-20}$  \\
8    & 4  &  1.65$\times 10^{-5}$    & 2.30$\times 10^{-5}$   & 1.56$\times 10^{-5}  $ &1.73$\times 10^{-5} $  \\
     & 8  &  8.16$\times 10^{-10}$   & 3.12$\times 10^{-11}$  & 1.28$\times 10^{-11} $ &1.81$\times 10^{-11}$  \\
     & 16 &  1.08$\times 10^{-10}$   & 2.00$\times 10^{-14}$  & 8.08$\times 10^{-24} $ &3.65$\times 10^{-25}$  \\
16   & 4  &  1.66$\times 10^{-5}$    & 2.30$\times 10^{-5}$   & 1.56$\times 10^{-5}  $ &1.73$\times 10^{-5} $  \\
     & 8  &  8.69$\times 10^{-11}$   & 3.12$\times 10^{-11}$  & 1.28$\times 10^{-11} $ &1.81$\times 10^{-11}$  \\
     & 16 &  7.49$\times 10^{-11}$   & 5.44$\times 10^{-21}$  & 2.78$\times 10^{-25} $ &3.65$\times 10^{-25}$  \\\bottomrule
\end{tabular}
\end{table}
\end{example}

\begin{example} We consider \eqref{eq:example 2rd} with $x$\ close to
the origin. Since $\alpha=1/2$ in this case, we use Algorithm I to
compute $I_1(x)$ and the Gauss-Laguerre quadrature rule
$Q_n^{(2)}(f,\omega,x)$ in \eqref{def: GaussLaguerre2} to compute
$I_2(x)$. For simplicity, we choose $N_1=2N$ when
$N>2\widetilde{\omega}$ in our implementation and the last integral
in \eqref{eq: closed form} is computed by using 32-point
Gauss-Laguerre quadrature. The absolute error is presented in Table
3 for $a=1$ and $\omega=10$, which implies that the convergence is
quite rapid as $N$ and $n$ increase. In Table 4 we present the
absolute error for several values of $\omega$ with fixed $a$ and
$x$. As we can see, the accuracy greatly improves as $\omega$
increases.
\begin{table}[h] %
\caption{Absolute error in computing \eqref{eq:example 2rd} with
$a=1$, $\omega=10$ and $x=10^{-\delta}$.}\vspace{0.4cm}
\label{aggiungi}\centering%
\begin{tabular}{ccllll}
\toprule%
$n$  &   $N$    &  $\delta=1$     &   $\delta=2$   &  $\delta=3$  &
$\delta=4$
\\\toprule
4    & 4  &  1.53$\times 10^{-3}$    & 3.56$\times 10^{-3}$  & 4.56$\times 10^{-3} $ &4.67$\times 10^{-3}$  \\
     & 8  &  1.31$\times 10^{-6}$    & 1.63$\times 10^{-7}$  & 2.87$\times 10^{-6} $ &3.25$\times 10^{-6}$  \\
     & 16 &  1.16$\times 10^{-7}$    & 4.51$\times 10^{-8}$  & 4.08$\times 10^{-8} $ &4.03$\times 10^{-8}$  \\
8    & 4  &  1.53$\times 10^{-3}$    & 3.56$\times 10^{-3}$  & 4.56$\times 10^{-3} $ &4.67$\times 10^{-3}$  \\
     & 8  &  1.23$\times 10^{-6}$    & 1.29$\times 10^{-7}$  & 2.89$\times 10^{-6} $ &3.27$\times 10^{-6}$  \\
     & 16 &  9.20$\times 10^{-11}$   & 2.22$\times 10^{-11}$ & 1.62$\times 10^{-11}$ &1.51$\times 10^{-11}$  \\
16   & 4  &  1.53$\times 10^{-3}$    & 3.56$\times 10^{-3}$  & 4.56$\times 10^{-3} $ &4.67$\times 10^{-3}$  \\
     & 8  &  1.22$\times 10^{-6}$    & 1.29$\times 10^{-7}$  & 2.89$\times 10^{-6} $ &3.27$\times 10^{-6}$  \\
     & 16 &  1.39$\times 10^{-12}$   & 2.46$\times 10^{-12}$ & 1.33$\times 10^{-12}$ &2.38$\times 10^{-12}$  \\\bottomrule
\end{tabular}
\end{table}

\begin{table}[h] %
\caption{Absolute error in computing \eqref{eq:example 2rd} with
$a=1$, $x=0.02$ and several values of $\omega$.}\vspace{0.4cm}
\label{aggiungi}\centering%
\begin{tabular}{ccllll}
\toprule%
$n$  &   $N$    &  $\omega=5$     &   $\omega=20$   &  $\omega=80$ &
$\omega=320$
\\\toprule
4    & 8  &  5.50$\times 10^{-6}$    & 2.02$\times 10^{-6}$   & 2.04$\times 10^{-6}  $ &2.38$\times 10^{-6}$  \\
     & 16 &  5.08$\times 10^{-6}$    & 2.15$\times 10^{-10}$  & 1.23$\times 10^{-12} $ &2.60$\times 10^{-12}$  \\
     & 32 &  5.08$\times 10^{-6}$    & 2.15$\times 10^{-10}$  & 1.19$\times 10^{-15} $ &4.65$\times 10^{-21}$  \\
8    & 8  &  2.06$\times 10^{-6}$    & 2.02$\times 10^{-6}$   & 2.04$\times 10^{-6}  $ &2.38$\times 10^{-6}$  \\
     & 16 &  2.41$\times 10^{-8}$    & 4.17$\times 10^{-13}$  & 1.23$\times 10^{-12} $ &2.60$\times 10^{-12}$  \\
     & 32 &  2.41$\times 10^{-8}$    & 3.86$\times 10^{-15}$  & 2.59$\times 10^{-24} $ &6.75$\times 10^{-25}$  \\
16   & 8  &  2.08$\times 10^{-6}$    & 2.02$\times 10^{-6}$   & 2.04$\times 10^{-6}  $ &2.38$\times 10^{-6}$  \\
     & 16 &  1.37$\times 10^{-11}$   & 4.13$\times 10^{-13}$  & 1.23$\times 10^{-12} $ &2.60$\times 10^{-12}$  \\
     & 32 &  1.39$\times 10^{-11}$   & 9.01$\times 10^{-22}$  & 1.11$\times 10^{-24} $ &6.75$\times 10^{-25}$  \\\bottomrule
\end{tabular}
\end{table}
\end{example}
\begin{remark}
When $\alpha=0$, the uniform convergence of the quadrature rule
\eqref{def: finiteoscill} has been proved in \cite{wang1}. When
$0<\alpha<1$, however, we don't currently know whether the uniform
convergence still holds. Numerical results show that the quadrature
rule \eqref{def: finiteoscill} remains accurate when $x$ is small,
as can be observed from Table 3.
\end{remark}

\subsection{The regime $x=0$}
When $x=0$, we need to deal with the Hadamard finite-part integral
$\ddashint_0^{\infty}e^{i\omega t}\frac{f(t)}{t}dt$, which is
introduced in Section \ref{sec:hadamard}. In view of
\eqref{eq:Hadamard int}, it suffices to find a fast method for
evaluating the integral
\begin{align}
\int_0^{\infty}\frac{f(t)-a_0t^{-\alpha}}{t}e^{i\omega t}dt,
\end{align}
or equivalently,
\begin{align}
\int_0^{\infty}t^{-\alpha}g(t)e^{i\omega t}dt,
\end{align}
where
\begin{equation}
g(t)=t^{\alpha}\frac{f(t)-a_0t^{-\alpha}}{t}.
\end{equation}
Since $g(t)$ is holomorphic in the first quadrant, which follows
from the assumptions in Lemma \ref{lem:transfer transform} and
Theorem \ref{thm:asy x>0}, we can apply Gaussian quadrature rules as
proposed by Wong \cite{wong2}:
\begin{align}\label{eq: Wong formula}
\int_0^{\infty}t^{-\alpha}g(t)e^{i\omega
t}dt=\frac{\exp((1-\alpha)\frac{i\pi}{2})}{\omega^{1-\alpha}}\sum_{k=1}^{n}w_kg\left(\frac{it_k}{\omega}\right)+E_n(g),
\end{align}
where $\{t_k,w_j\}_{j=1}^{n}$ are the nodes and weights of the
generalized Gauss-Laguerre quadrature with respect to weight
$t^{-\alpha}e^{-t}$. The reminder $E_n(g)$ is given by
\begin{align*}
E_n(g)=\frac{n!\Gamma(n-\alpha+1)}{(2n)!\omega^{2n-\alpha+1}}e^{(2n-\alpha+1)\frac{i\pi}{2}}g^{(2n)}(i\xi/\omega),
\quad 0<\xi<\infty.
\end{align*}
Therefore, we can approximate $\ddashint_0^{\infty}e^{i\omega
t}\frac{f(t)}{t}dt$ by
\begin{equation}\label{def: Wong quadrature}
\left\{
                    \begin{array}{ll}
                      {\displaystyle \frac{e^{\frac{\pi}{2}(2-\alpha)i}\omega^\alpha}{\alpha}\Gamma(1-\alpha)a_0+
                        \frac{\exp((1-\alpha)\frac{i\pi}{2})}{\omega^{1-\alpha}}\sum_{k=1}^{n}w_kg\left(\frac{it_k}{\omega}\right)},
                      & \hbox{if $0<\alpha<1$,} \\
                      {\displaystyle ( i\frac{\pi}{2}-\gamma-\log\omega)f(0)+\sum_{k=1}^{n}w_k\frac{f\left(\frac{it_k}{\omega}\right)-f(0)}{t_k}}, & \hbox{if $\alpha=0$,}
                    \end{array}
                  \right.
\end{equation}

\begin{example}
We use quadrature rule \eqref{def: Wong quadrature} to approximate
\eqref{eq:Hadamard int} with $f(t)=e^{-t}$ and
$f(t)=\sqrt{t}/(1+t)$, which corresponds to $\alpha=0$ and
$\alpha=\frac{1}{2}$ respectively, for several values of $\omega$.
The absolute errors are shown in Figure \ref{fig:hadarmard}.
Clearly, the convergence of the quadrature rule \eqref{def: Wong
quadrature} is rapid and satisfactory.

\begin{figure}[h]
\centering
\begin{overpic}
[width=5.5cm]{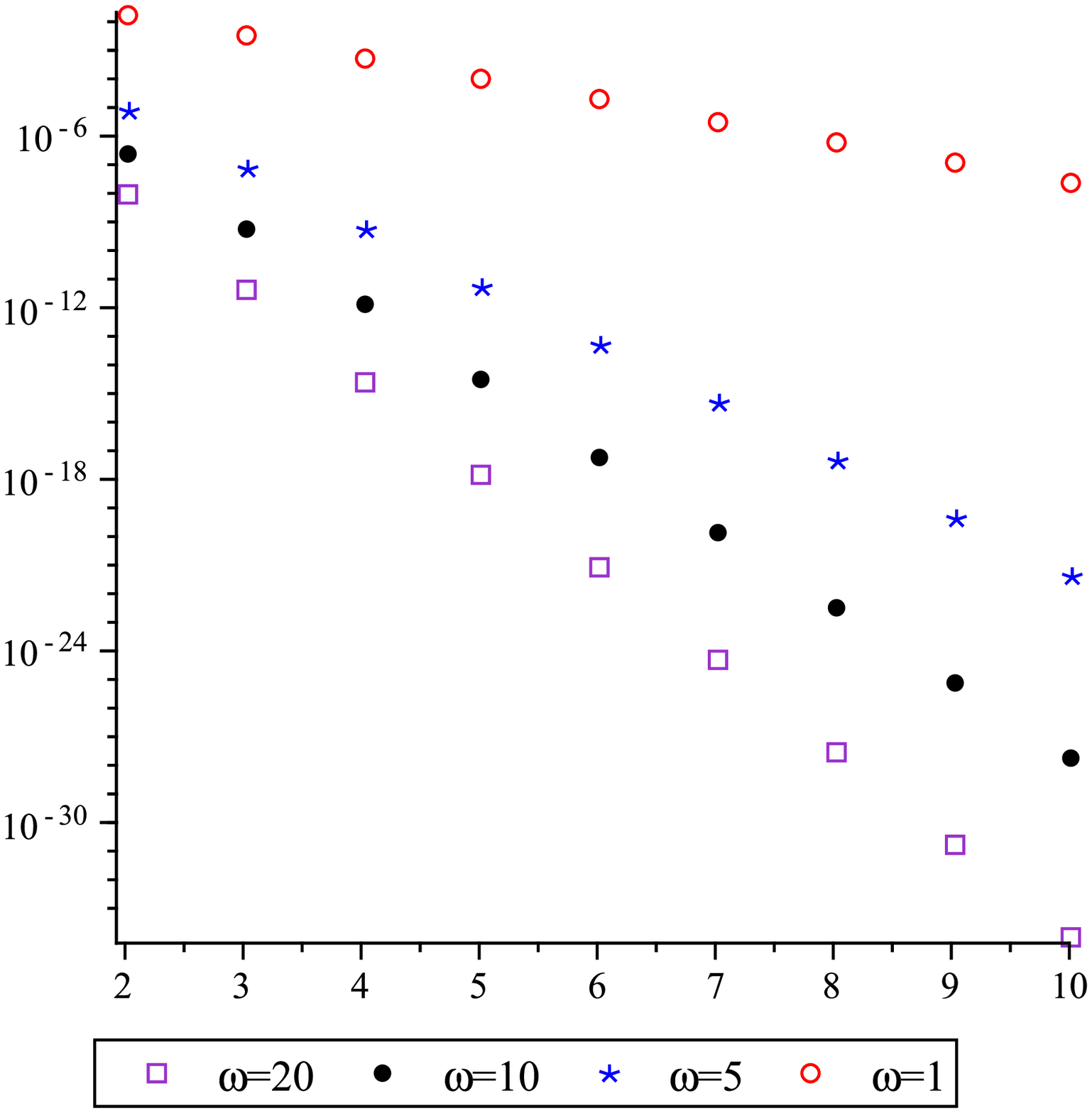}
\end{overpic}
\qquad
\begin{overpic}
[width=5.5cm]{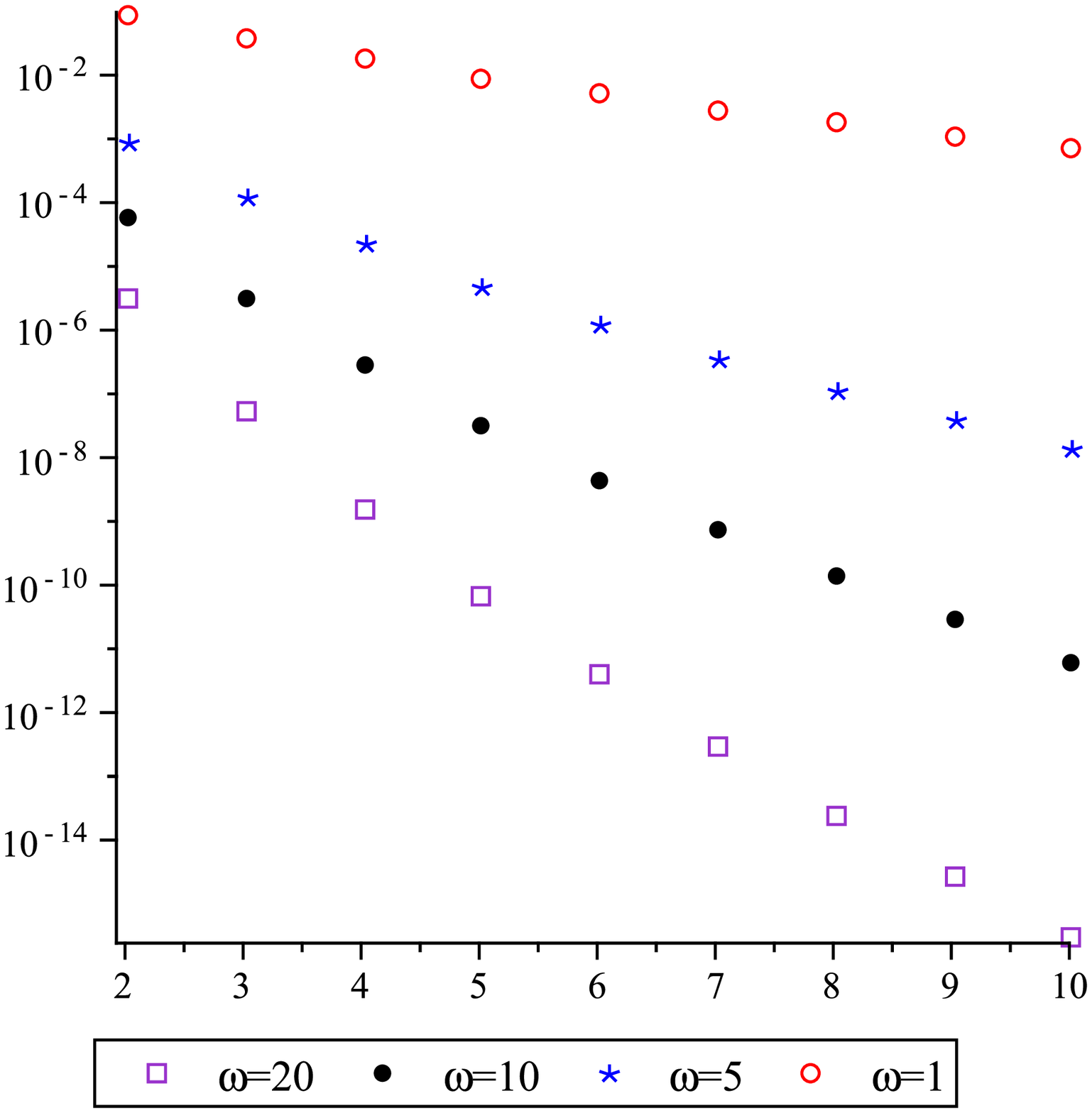}
\end{overpic}
\caption{The error of the quadrature \eqref{def: Wong quadrature}
for $f(t)=e^{-t}$ (left) and $f(t)=\sqrt{t}/(1+t)$ (right) with $n$
ranging from $2$ to $10$.}\label{fig:hadarmard}
\end{figure}

\end{example}

\section{Extensions}\label{sec:extension}
Oscillatory Hilbert transforms with Bessel type oscillators are
defined by
\begin{equation*}
H^+(f(t)J_\nu(\omega
t))(x):=\dashint_{0}^{\infty}\frac{f(t)}{t-x}J_\nu(\omega t)dt
\end{equation*}
and
\begin{equation*}
H^+(f(t)Y_\nu(\omega
t))(x):=\dashint_{0}^{\infty}\frac{f(t)}{t-x}Y_\nu(\omega t)dt,
\end{equation*}
where $J_{\nu}(t)$ and $Y_{\nu}(t)$ are the Bessel functions of the
first and second kind, respectively. Such kind of transformations
have applications in physics such as water-wave radiation problem
\cite{martin}. It turns out that Lemma \ref{lem:transfer transform}
can also be extended to oscillatory Bessel Hilbert transforms, by
using similar ideas.
\begin{lemma}\label{lem:Bessel}
Suppose that $f$ has an analytic continuation to the right half
plane, except possibly a branch point at the origin, and there exist
constants $M>0$, $\delta < \frac{3}{2}$ and $0\leq d < \omega$ such
that
\begin{equation}
|f(z)|\leq M|z|^{\delta}e^{d \Im(z)},
\end{equation}\label{eq:growth of f Bessel}
as $|z|\rightarrow\infty$ in the right-half plane. Then,
\begin{align}\label{def: Bessel first kind}
H^+(f(t)J_\nu(\omega t))(x)=-\pi f(x)Y_\nu(\omega
x)-\frac{1}{\pi}\int_{0}^{\infty}\frac{K_\nu(\omega
y)}{y^2+x^2}g_1(y)dy
\end{align}
and
\begin{align}\label{def: Bessel second kind}
H^+(f(t)Y_\nu(\omega t))(x)=\pi f(x)J_\nu(\omega
x)+\frac{i}{\pi}\int_{0}^{\infty}\frac{K_\nu(\omega
y)}{y^2+x^2}g_2(y)dy,
\end{align}
whenever the integrals exist, where $K_\nu(t)$ is the modified
Bessel function of the second kind and
\[g_j(y)=(x+iy)e^{-\frac{\nu}{2}\pi i}f(iy)+(-1)^{j+1}(x-iy)e^{\frac{\nu}{2}\pi
i}f(-iy), \quad j=1,2.
\]
\end{lemma}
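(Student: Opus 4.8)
The plan is to follow the contour--deformation scheme used in the proof of Lemma~\ref{lem:transfer transform}, with the Fourier kernel $e^{i\omega t}$ replaced by the Hankel functions. Write $J_\nu(z)=\tfrac12\bigl(H^{(1)}_\nu(z)+H^{(2)}_\nu(z)\bigr)$ and $Y_\nu(z)=\tfrac1{2i}\bigl(H^{(1)}_\nu(z)-H^{(2)}_\nu(z)\bigr)$, and recall the large--argument behaviour $H^{(1)}_\nu(z)=\mathcal{O}\bigl(|z|^{-1/2}e^{-\Im z}\bigr)$ uniformly for $\Im z\ge 0$ and $H^{(2)}_\nu(z)=\mathcal{O}\bigl(|z|^{-1/2}e^{\Im z}\bigr)$ uniformly for $\Im z\le 0$. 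Hence $H^{(1)}_\nu(\omega z)$ decays in the first quadrant while $H^{(2)}_\nu(\omega z)$ decays in the fourth quadrant; this is exactly why the hypothesis now demands that $f$ be analytic in the whole right half plane rather than only in the first quadrant.

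To treat $H^+(f(t)J_\nu(\omega t))(x)$, I would split the Cauchy principal value into its $H^{(1)}_\nu$ and $H^{(2)}_\nu$ parts and deal with each by Cauchy's theorem. For the $H^{(1)}_\nu$ part, apply Cauchy's theorem on the quarter--disc in the first quadrant bounded by $[0,R]$, the arc $\Gamma_R$ and $[0,iR]$, indented by a small upper half--disc at $x$ and, when needed, a small quarter--circle arc at the origin. On $\Gamma_R$ the bound of \eqref{eq:est of 1st int} acquires the extra factor $|z|^{-1/2}$ from $H^{(1)}_\nu$, so that the estimate there is $\mathcal{O}(R^{\delta-3/2})\to 0$, which is precisely where the assumption $\delta<\tfrac32$ enters; the factor $e^{d\Im z}$ allowed by \eqref{eq:growth of f Bessel} is again absorbed because $d<\omega$. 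The small arc at the origin contributes nothing in the limit, since there the integrand is $\mathcal{O}(|z|^{-\nu-\alpha})$ (a logarithmic power when $\nu$ is an integer), and $\nu+\alpha<1$ is exactly the condition under which the right--hand side converges at $0$, i.e.\ it holds whenever the integrals exist. The upper half--disc at $x$ contributes $i\pi f(x)H^{(1)}_\nu(\omega x)$ to the transform, exactly as in \eqref{eq:est of 3rd int}. On $[0,iR]$ I substitute $z=iy$ and use the standard connection formula expressing $H^{(1)}_\nu(i\omega y)$ through the modified Bessel function $K_\nu(\omega y)$ (cf.\ \cite{Abram}). The $H^{(2)}_\nu$ part is handled identically on the mirror--image quarter--disc in the fourth quadrant, where the indentation at $x$ is a lower half--disc; because of the opposite orientation, this one contributes $-i\pi f(x)H^{(2)}_\nu(\omega x)$ to the transform, and the segment $[0,-iR]$, after $z=-iy$, is converted via the corresponding connection formula for $H^{(2)}_\nu(-i\omega y)$.

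Adding the two parts, the pole contributions combine as $\tfrac12\bigl(i\pi f(x)H^{(1)}_\nu(\omega x)-i\pi f(x)H^{(2)}_\nu(\omega x)\bigr)=\tfrac{i\pi}{2}f(x)\cdot 2i\,Y_\nu(\omega x)=-\pi f(x)Y_\nu(\omega x)$, which is the first term of \eqref{def: Bessel first kind}; the two imaginary--axis integrals combine, after writing $\frac1{iy-x}=-\frac{x+iy}{y^2+x^2}$ and $\frac1{-iy-x}=-\frac{x-iy}{y^2+x^2}$ and collecting the constants from the two connection formulas, into $-\frac1\pi\int_0^\infty\frac{K_\nu(\omega y)}{y^2+x^2}g_1(y)\,dy$ with $g_1$ as stated. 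The identity \eqref{def: Bessel second kind} for $H^+(f(t)Y_\nu(\omega t))(x)$ follows along the same path with the weight $\tfrac12$ replaced by $\tfrac1{2i}$, which flips the relative sign of the two Hankel contributions and thereby turns the pole term into $\pi f(x)J_\nu(\omega x)$ and the kernel numerator into $g_2$, with prefactor $\frac{i}{\pi}$. The main obstacle I expect is bookkeeping rather than analysis: keeping the several factors of $i$ and $\pi$ from the Hankel--to--$K_\nu$ connection formulas consistent with the $\tfrac12$ versus $\tfrac1{2i}$ splitting, and checking carefully that the contributions of the small arcs at the origin really vanish under precisely the stated hypotheses (so that the deformations are legitimate). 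The rest is a faithful transcription of the proof of Lemma~\ref{lem:transfer transform}.
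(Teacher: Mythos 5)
Your proposal is correct and is essentially the paper's own argument: the paper splits $J_\nu(\omega t)$ via the identity $i\pi J_\nu(z)=e^{-\nu\pi i/2}K_\nu(ze^{-i\pi/2})-e^{\nu\pi i/2}K_\nu(ze^{i\pi/2})$, which is exactly your Hankel-function decomposition up to the constants relating $H_\nu^{(1,2)}$ to $K_\nu$, and then deforms one piece into the first quadrant and the other into the fourth, with the same quarter-disc contour, the same half-circle contribution at $x$, and the same use of the $|z|^{-1/2}$ decay of the kernel to exploit $\delta<\tfrac32$ on the large arc. Your explicit remark about the small arc at the origin is a point the paper's sketched proof passes over silently (it only notes afterwards that $K_\nu$ is non-integrably singular at $0$ for $\nu\ge1$), but otherwise the two proofs coincide.
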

\begin{proof}
We only give a sketched proof of \eqref{def: Bessel first kind},
since the proof of \eqref{def: Bessel second kind} can be handled in
a similar manner. On account of the identity (see \cite[Eq.~
9.6.4]{Abram})
\[
i\pi J_\nu(z)=e^{-\frac{\nu}{2}\pi i}K_\nu(ze^{-\frac{1}{2}\pi
i})-e^{\frac{\nu}{2}\pi i}K_\nu(ze^{\frac{1}{2}\pi i}),\qquad |\arg
z|\leq \frac{\pi}{2},
\]
it follows
\begin{align}\label{eq: bessel equation1}
&\dashint_{0}^{\infty}\frac{f(t)}{t-x}J_\nu(\omega t)dt\nonumber
\\
&=\frac{1}{i\pi}\left[e^{-\frac{\nu}{2}\pi
i}\dashint_{0}^{\infty}\frac{f(t)}{t-x}K_\nu(-i\omega
t)dt-e^{\frac{\nu}{2}\pi
i}\dashint_{0}^{\infty}\frac{f(t)}{t-x}K_\nu(i\omega t)dt \right].
\end{align}
For the first integral on the right hand side of \eqref{eq: bessel
equation1}, by considering the same contour as shown in Figure
\ref{fig:integralcontour}, we obtain from Cauchy's theorem that
\begin{align}\label{eq:1st integral}
&\dashint_{0}^{R}\frac{f(t)}{t-x}K_\nu(-i\omega t)dt \nonumber \\
&=i\pi f(x)K_\nu(-i\omega
x)-\int_{\Gamma_R}\frac{f(z)}{z-x}K_\nu(-i\omega
z)dz+i\int_{0}^{R}\frac{f(iy)}{iy-x}K_\nu(\omega y)dy.
\end{align}
Recall the asymptotic expansion of the modified Bessel function
$K_\nu(z)$ \cite[p.~378]{Abram}
\[
K_\nu(z)\sim
\sqrt{\frac{\pi}{2z}}e^{-z}\left[1+\frac{4\nu^2-1}{8z}+\mathcal{O}(z^{-2})\right],
\quad z\rightarrow\infty, \quad |\arg z|< \frac{3\pi}{2}.
\]
We have the following estimation of the integral over $\Gamma_R$ for
large $R$
\[\begin{array}{lll} {\displaystyle
\left|\int_{\Gamma}\frac{f(z)}{z-x}K_\nu(-i\omega
z)dz\right|}&=&{\displaystyle
\left|\int_{0}^{\frac{\pi}{2}}\frac{f(R e^{i\theta})}{R
e^{i\theta}-x}K_\nu(-i\omega R e^{i\theta})iR
e^{i\theta}d\theta\right|}\\
&\leq&{\displaystyle \frac{MR^{1+\delta}}{R-x}
\int_{0}^{\frac{\pi}{2}}e^{dR\sin\theta}|K_\nu(-i\omega R e^{i\theta})|d\theta}\\
&\leq&{\displaystyle
\frac{MR^{1+\delta}}{R-x}\sqrt{\frac{\pi}{2\omega
R}}\int_{0}^{\frac{\pi}{2}}e^{-(\omega-d) R
\sin\theta}(1+\mathcal{O}(R^{-1}))d\theta }\\
&\rightarrow& 0, \qquad \textrm{as $R\to\infty$}.
\end{array}\]
Hence, letting $R\rightarrow\infty$ in \eqref{eq:1st integral}, we
arrive at
\begin{equation}\label{eq:1st integral fin}
\dashint_{0}^{\infty}\frac{f(t)}{t-x}K_\nu(-i\omega t)dt=i\pi
f(x)K_\nu(-i\omega
x)+i\int_{0}^{\infty}\frac{f(iy)}{iy-x}K_\nu(\omega y)dy.
\end{equation}
Similarly, we can deform the integration path to the negative
imaginary axis for the second integral on the right hand side of
\eqref{eq: bessel equation1} and get
\begin{equation}\label{eq:2nd integral fin}
\dashint_{0}^{\infty}\frac{f(t)}{t-x}K_\nu(i\omega t)dt=-i\pi
f(x)K_\nu(i\omega
x)+i\int_{0}^{\infty}\frac{f(-iy)}{iy+x}K_\nu(\omega y)dy.
\end{equation}
A combination of \eqref{eq: bessel equation1}, \eqref{eq:1st
integral fin} and \eqref{eq:2nd integral fin} gives us \eqref{def:
Bessel first kind}.
\end{proof}

From the above lemma, we establish several interesting identities
with $\nu=0,1$, which have not been found in classical reference
books \cite{Abram,Gradshteyn} and which might have important
applications in practice:
\begin{corollary}We have
\begin{align}\label{eq:Bessel1}
\dashint_{0}^{\infty}\frac{J_{\nu}(\omega
t)}{t-x}dt=(-1)^{\nu+1}\frac{\pi}{2}
\left[\mathrm{\mathbf{H}}_{-\nu}(\omega x)+(-1)^\nu Y_{\nu}(\omega
x)\right],\quad \nu=0,1,
\end{align}
and
\begin{align}\label{eq:Bessel2}
\dashint_{0}^{\infty}\frac{Y_0(\omega t)}{t-x}dt=\pi J_0(\omega
x)-\frac{2}{\pi}S_{-1,0}(\omega x),
\end{align}
and
\begin{align}\label{eq:Bessel21}
\dashint_{0}^{\infty}\frac{tY_1(\omega t)}{t-x}dt=\pi xJ_1(\omega
x)-\frac{4x}{\pi}S_{-2,1}(\omega x),
\end{align}
where $\mathbf{H}_{\nu}(z)$ is the Struve function and
$S_{\mu,\nu}(z)$ is the Lommel function of the second kind.
\end{corollary}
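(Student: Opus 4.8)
The plan is to specialise Lemma~\ref{lem:Bessel} to the two most elementary densities, $f\equiv 1$ and $f(t)=t$, and then to recognise the resulting real integrals of $K_\nu$ against $1/(y^2+x^2)$ as closed forms in terms of Struve and Lommel functions; throughout $x>0$. Both $f\equiv1$ and $f(t)=t$ are entire, so in particular analytic in the right half plane with no branch point, and they satisfy the growth hypothesis of Lemma~\ref{lem:Bessel} with $d=0<\omega$ and with $\delta=0$ respectively $\delta=1$, both $<\tfrac32$. Moreover the integrals on the right-hand sides of \eqref{def: Bessel first kind} and \eqref{def: Bessel second kind} converge in every case: $K_\nu(\omega y)$ decays exponentially as $y\to\infty$ while near the origin $K_0(\omega y)=O(\log y)$ and $y\mapsto yK_1(\omega y)$ is bounded, against the polynomial factors $g_1,g_2$. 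So the two identities of Lemma~\ref{lem:Bessel} apply, and it remains to evaluate the $K_\nu$-integrals.

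First I would treat \eqref{eq:Bessel1}. Taking $f\equiv1$ in \eqref{def: Bessel first kind} and computing the auxiliary function gives $g_1(y)=(x+iy)e^{-\nu\pi i/2}+(x-iy)e^{\nu\pi i/2}$, which equals $2x$ when $\nu=0$ and $2y$ when $\nu=1$, whence
\[
\dashint_0^\infty\frac{J_0(\omega t)}{t-x}\,dt=-\pi Y_0(\omega x)-\frac{2x}{\pi}\int_0^\infty\frac{K_0(\omega y)}{y^2+x^2}\,dy,\qquad
\dashint_0^\infty\frac{J_1(\omega t)}{t-x}\,dt=-\pi Y_1(\omega x)-\frac{2}{\pi}\int_0^\infty\frac{y\,K_1(\omega y)}{y^2+x^2}\,dy.
\]
Then I would insert the classical (Nicholson-/Basset-type) evaluations
\[
\int_0^\infty\frac{K_0(ay)}{y^2+b^2}\,dy=\frac{\pi^2}{4b}\bigl[\mathrm{\mathbf{H}}_0(ab)-Y_0(ab)\bigr],\qquad
\int_0^\infty\frac{y\,K_1(ay)}{y^2+b^2}\,dy=-\frac{\pi^2}{4}\bigl[\mathrm{\mathbf{H}}_{-1}(ab)+Y_1(ab)\bigr],
\]
which one reads off from standard tables (e.g.\ Watson's treatise or \cite{Gradshteyn}), and collect terms; both cases then coalesce into the single formula \eqref{eq:Bessel1}.

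For \eqref{eq:Bessel2} and \eqref{eq:Bessel21} the argument is identical, now using \eqref{def: Bessel second kind}. With $f\equiv1$, $\nu=0$ one gets $g_2(y)=(x+iy)-(x-iy)=2iy$, so
\[
\dashint_0^\infty\frac{Y_0(\omega t)}{t-x}\,dt=\pi J_0(\omega x)-\frac{2}{\pi}\int_0^\infty\frac{y\,K_0(\omega y)}{y^2+x^2}\,dy,
\]
and with $f(t)=t$, $\nu=1$ one gets $g_2(y)=y(x+iy)-y(x-iy)=2iy^2$, so
\[
\dashint_0^\infty\frac{t\,Y_1(\omega t)}{t-x}\,dt=\pi x\,J_1(\omega x)-\frac{2}{\pi}\int_0^\infty\frac{y^2\,K_1(\omega y)}{y^2+x^2}\,dy.
\]
The proof is then finished by substituting the integral representations of the Lommel function of the second kind,
\[
\int_0^\infty\frac{y\,K_0(ay)}{y^2+b^2}\,dy=S_{-1,0}(ab),\qquad
\int_0^\infty\frac{y^2\,K_1(ay)}{y^2+b^2}\,dy=2b\,S_{-2,1}(ab),
\]
again taken from the standard tables.

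The contour/algebraic part --- checking the hypotheses of Lemma~\ref{lem:Bessel} and computing $g_1,g_2$ --- is entirely routine. The real content, and the step I expect to be the main obstacle, is the last one: locating and correctly normalising the four $K_\nu$-against-$1/(y^2+b^2)$ integrals. The two Struve-function identities needed for \eqref{eq:Bessel1} are classical, but for \eqref{eq:Bessel2}--\eqref{eq:Bessel21} one must match against the less widely tabulated integral representations of $S_{\mu,\nu}$; some care with the constants is needed, and if a reference records $\mathrm{\mathbf{H}}_1$ rather than $\mathrm{\mathbf{H}}_{-1}$ one should use the elementary relation $\mathrm{\mathbf{H}}_{-1}(z)=\mathrm{\mathbf{H}}_1(z)-2/\pi$ to reach the normalisation displayed in the corollary. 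Once those four evaluations are in hand, \eqref{eq:Bessel1}--\eqref{eq:Bessel21} follow by a one-line simplification.
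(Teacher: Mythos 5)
Your proposal is correct and follows essentially the same route as the paper: specialize Lemma \ref{lem:Bessel} to $f\equiv 1$ (and $f(t)=t$ for \eqref{eq:Bessel21}), compute $g_1,g_2$, and then evaluate the resulting $K_\nu$-integrals against $1/(y^2+x^2)$ via the tabulated formulas (Gradshteyn--Ryzhik 6.566.3 and 6.565.7), which are exactly the specialized identities you quote. The constants all check out, including the $f(t)=t$ case the paper leaves to the reader.
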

\begin{proof}
With $f=1$ and $\nu=0,1$ in \eqref{def: Bessel first kind}, it
follows that
\begin{align}\label{eq:Bessel11}
\dashint_{0}^{\infty}\frac{J_\nu(\omega t)}{t-x}dt=\left\{
                                                     \begin{array}{ll}
                                                       -\pi Y_0(\omega
x)-\frac{2x}{\pi}\int_{0}^{\infty}\frac{K_0(\omega y)}{y^2+x^2}dy, & \hbox{if $\nu=0$,} \\
                                                       -\pi Y_1(\omega
x)-\frac{2}{\pi}\int_{0}^{\infty}\frac{yK_1(\omega y)}{y^2+x^2}dy, &
\hbox{if $\nu=1$.}
                                                     \end{array}
                                                   \right.
\end{align}
Recall the identity (see \cite[Eq. 6.566.3]{Gradshteyn})
\begin{align*}
\int_{0}^{\infty}\frac{y^{\nu}K_{\nu}(\omega
y)}{y^2+x^2}dy=\frac{\pi^2x^{\nu-1}}{4\cos(\nu\pi)}[\mathrm{\mathbf{H}}_{-\nu}(\omega
x)-Y_{-\nu}(\omega x)],\quad \omega>0,\quad \Re x>0,\quad \Re
\nu>-\frac{1}{2}.
\end{align*}
Formula \eqref{eq:Bessel1} then follows from inserting the above
identity with $\nu=0$ and $\nu=1$, respectively.

To show \eqref{eq:Bessel2}, we set $f=1$ and $\nu=0$ in \eqref{def:
Bessel second kind} and obtain
\begin{align}\label{eq:zero order Bessel second}
\dashint_{0}^{\infty}\frac{Y_0(\omega t)}{t-x}dt=\pi J_0(\omega
x)-\frac{2}{\pi}\int_{0}^{\infty}\frac{yK_0(\omega y)}{y^2+x^2}dy.
\end{align}
Since
\begin{align}
\int_{0}^{\infty}y^{1+\nu}(y^2+x^2)^{\mu}K_{\nu}(\omega
y)dy=2^{\nu}\Gamma(\nu+1)x^{\nu+\mu+1}\omega^{-1-\mu}S_{\mu-\nu,\mu+\nu+1}(\omega
x)
\end{align}
for $\Re x>0$, $\Re \omega>0$ and $\Re \nu>-1$ (see
\cite[Eq.~6.565.7]{Gradshteyn}), substituting the above identity
with $\nu=0$ and $\mu=-1$ into \eqref{eq:zero order Bessel second}
gives us \eqref{eq:Bessel2}.

Finally, the proof of \eqref{eq:Bessel21} is similar to that of
\eqref{eq:Bessel2}, and we omit the details here.
\end{proof}

We expect that Lemma \ref{lem:Bessel} might play an important role
in the asymptotic and numerical study of oscillatory Hilbert
transforms. Note that the modified Bessel functions $K_{\nu}$ have a
non-integrable singularity at $0$ for $\nu\geq1$.

Finally, we like to recall that the numerical method proposed in
\cite{asheim} generalized steepest descent-based methods for
Fourier-type integrals of the form \eqref{eq:Fourier integrals} to
oscillatory transforms of the form
\[ \int_{0}^{\infty}f(x)H(\omega x)dx,  \]
where $H$ can be, e.g., a Bessel function. We expect that this
method can be extended in turn to compute oscillatory Hilbert
transforms with more general oscillators as well.


\section{Concluding remarks}\label{sec:remarks}
In this paper, we have considered asymptotic expansions and fast
computation of the oscillatory Hilbert transforms
\eqref{def:one-side transform}. Unlike previous work, which focused
on the behaviour of oscillatory Hilbert transforms for large $x$, we
derive asymptotic expansions of such transforms for large $\omega$.
These expansions clarify the asymptotic behaviour for large $\omega$
and provide a powerful approach for designing efficient and accurate
approximation methods.

Numerical methods for the calculation of the oscillatory Hilbert
transforms are presented. We classify our discussion into three
regimes, namely, $x=\mathcal{O}(1)$ or $x\gg1$, $0<x\ll 1$ and
$x=0$. For each regime, we have designed efficient numerical
approaches. Even for small values of $\omega$, our proposed methods
remain quite accurate. Numerical examples are provided to confirm
our analysis.

In the implementation of our methods, the function $f(x)$ is
required to be analytic in the first quadrant of the complex plane.
It seems that this requirement is more restrictive. If $f(x)$ is a
differentiable but not analytic function, then we may construct a
suitable Filon-type method which utilizes Hermite interpolation to
compute such transforms. These results will be reported in our
future work.

\section*{Acknowledgement}
We thank Andreas Asheim for helpful discussions.

\baselineskip 0.50cm

\end{document}